\numberwithin{equation}{section}
\numberwithin{figure}{section}
\theoremstyle{plain}
\newtheorem{thm}{\protect\theoremname}
  \theoremstyle{plain}
  \newtheorem{lem}[thm]{\protect\lemmaname}
  \theoremstyle{remark}
  \newtheorem*{rem*}{\protect\remarkname}
  \theoremstyle{plain}
  \newtheorem{cor}[thm]{\protect\corollaryname}
  \providecommand{\corollaryname}{Corollary}
  \providecommand{\lemmaname}{Lemma}
  \providecommand{\remarkname}{Remark}
\providecommand{\theoremname}{Theorem}
\begin{document}

\title{{\normalsize{}Axioms for the Real Numbers: A Constructive Approach}}

\author{Jean S. Joseph}
\begin{abstract}
{\normalsize{}We present axioms for the real numbers by omitting the
field axioms and then derive the field properties of the real numbers.
We prove all our theorems constructively.}{\normalsize \par}

{\normalsize{}\tableofcontents{}}{\normalsize \par}
\end{abstract}

\maketitle
\textit{Keywords:} constructive mathematics; real numbers; axiomatization;
ordered algebraic structures

$\vphantom{}$

$\vphantom{}$

\textit{Email}: jsjean00@gmail.com

\section{Introduction}

We axiomatically define the real numbers as the completion of the
rational numbers. We keep the usual strict order and field structure
on the rational numbers, and we embed the rational numbers in a certain
way into a complete set; we will later say what that certain way is
and what we mean by `complete'. That complete set with the rational
numbers is what we call the completion of the rational numbers. We
do not need the field axioms to say what the real numbers are because
the ring properties of the rational numbers can be extended to the
completion (Theorem \ref{thm: 18}) and the invertibility of the nonzero
real numbers can be proved (Theorem \ref{thm: 20}). 

Much of the theory of the completion of an ordered set has been developed
in \cite{key-5}; we will use here only what we need from \cite{key-5}. 

We prove our theorems constructively, meaning we abstain from using
the law of excluded middle. More about constructive mathematics can
be found in \cite{key-1,key-2-1,key-4} or on the Web. Some of the
theorems below have been proved in \cite{key-5}, so their proofs
are omitted.

\section{Completion of an Ordered Set}

In \cite{key-5}, an \textit{ordered set} is defined as a set $X$
with a binary relation $<$ such that, for all $x,y,z\in X$, 
\begin{itemize}
\item $x<y$ implies $y<x$ is false; (Asymmetry)
\item $x<y$ implies $x<z$ or $z<y$; (Cotransitivity)
\item $x<y$ is false and $y<x$ is false imply $x=y$; (Negative Antisymmetry).
\end{itemize}
We write $x\leq y$ for $y<x$ is false. Also, in \cite{key-5}, a
subset $S$ of an ordered set $X$ is \textit{almost dense} in $X$
if $x<y$ in $X$ implies $x\leq s<s'\leq y$ for some $s,s'\in S$,
and $S$ is \textit{bicofinal} in $X$ if, for each $x\in X$, $s\leq x\leq s'$
for some $s,s'\in S$. We say $S$ is \textit{finitely enumerable}
if $S$ is empty or there is a positive integer $n$ and a function
from $\left\{ 1,\ldots,n\right\} $ onto $S$. A subset $S$ of $X$
is \textit{upper order located} if, $x<y$ in $X$ implies either
$x<s$ for some $s\in S$ or $u<y$ for some upper bound $u$ of $S$.
An ordered set $X$ is \textit{complete} if each nonempty, bounded
above, and upper order located subset of $X$ has a supremum in $X$.
We call a nonempty, bounded above, and upper order located subset
of $X$ a \textit{supable} subset of $X$. 

In \cite{key-5}, a \textit{completion} of an ordered set $X$ is
an ordered set $Y$ together with an embedding $f$ of $X$ into $Y$
such that 
\begin{itemize}
\item $Y$ is complete; 
\item $f\left(X\right)$ is almost dense in $Y$, and 
\item $f\left(X\right)$ is bicofinal in $Y$.
\end{itemize}
An \textit{embedding} $f$ is a function such that $x<x'$ if and
only if $f\left(x\right)<f\left(x'\right)$. In \cite{key-5}, we
also construct a completion of an arbitrary ordered set and show that
a completion of an ordered set is unique, up to isomorphism. As mentioned
above, we keep the usual strict order and the field structure on the
rational numbers $\mathbb{Q}$, and we define the real numbers $\mathbb{R}$
as the completion of $\mathbb{Q}$.

\section{Field Structure on the Real Numbers}

In what follows, we will show under what conditions we can define
a unique addition, a unique additive inversion, and a unique multiplication
on $\mathbb{R}$ and then prove that $\mathbb{R}$ is a field.

\subsection{Archimedean ordered abelian groups}

In \cite{key-5}, we prove that the completion of an Archimedean ordered
abelian group is an Archimedean ordered abelian group. Let us give
some of the details. 

An ordered abelian group is \textit{Archimedean} if $x,y>0$ implies
there is a positive integer $n$ such that $x\leq ny$. An addition
on an ordered set is \textit{compatible with the order} if $x+z<y+z$
if and only if $x<y$, and $z+x<z+y$ if and only if $x<y$. If $X$
is an ordered set containing an Archimedean ordered abelian group
$A$ as an almost dense, bicofinal subset, we say an addition on $X$
is \textit{admissible} if it extends the addition on $A$ and is compatible
with the order on $X$. The following is proved in \cite{key-5} (Theorem
58):
\begin{thm}
Let $A$ be an Archimedean ordered abelian group, and let $X$ be
an ordered set containing $A$ as an almost dense, bicofinal subset.
Then any two admissible additions on $X$ are equal.
\end{thm}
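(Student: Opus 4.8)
The plan is to fix two admissible additions, which I will write as $+$ and $\oplus$, and to prove $x+y=x\oplus y$ for all $x,y\in X$. By Negative Antisymmetry it suffices to refute both $x+y<x\oplus y$ and $x\oplus y<x+y$, and since the two additions play symmetric roles I only need to rule out $x+y<x\oplus y$. The basic tool throughout is monotonicity: from compatibility with the order, together with the transitivity of $\le$ (which follows from Cotransitivity), I get that $a\le x$ and $b\le y$ imply $a+b\le x+y$, and dually $x\le a'$, $y\le b'$ imply $x+y\le a'+b'$; the same holds for $\oplus$. Crucially, whenever both arguments lie in $A$ the two sums coincide, since each addition extends the group operation of $A$, so all these bounds are one and the same element of $A$.

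The first substantive step I would isolate is that the two additions already agree as soon as one argument lies in $A$; this part is clean and uses no approximation. Fix $a\in A$ and $x\in X$ and suppose $a+x<a\oplus x$. Almost density gives $c,c'\in A$ with $a+x\le c<c'\le a\oplus x$. Writing $c_1=c-a$ and $c_1'=c'-a$ in the group $A$, I have $a+c_1=c$ and $a\oplus c_1'=c'$, so $a+x\le a+c_1$ and $a\oplus c_1'\le a\oplus x$; cancelling $a$ via compatibility yields $x\le c_1$ and $c_1'\le x$. Since $c<c'$ forces $c_1<c_1'$ by translation invariance in $A$, this contradicts $c_1'\le x\le c_1$. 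Hence $a+x=a\oplus x$, and the right-handed version $x+a=x\oplus a$ is proved identically using the other half of compatibility.

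Next I would reduce the general case to a one-variable approximation question. Choose, by bicofinality, $b,b'\in A$ with $b\le y\le b'$. Suppose $x+y<x\oplus y$ and take $c<c'$ in $A$ with $x+y\le c<c'\le x\oplus y$. Monotonicity and the agreement just established give $x+b=x\oplus b\le x+y\le c$ and $c'\le x\oplus y\le x\oplus b'=x+b'$. Cancelling $b$ and $b'$ as before produces $x\le c-b$ and $c'-b'\le x$, whence $c'-b'\le c-b$, that is $c'-c\le b'-b$ in $A$. Thus $0<c'-c\le b'-b$ for \emph{every} pair of bounds $b\le y\le b'$, and to finish I only need a pair whose width $b'-b$ is strictly smaller than the fixed positive quantity $c'-c$.

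The hard part, and the main obstacle, is precisely this approximation lemma: for each $y\in X$ and each $\varepsilon>0$ in $A$ one can trap $y$ between elements of $A$ of $A$-distance less than $\varepsilon$. My plan is to get coarse bounds $s\le y\le s'$ from bicofinality, lay down the finite grid $s,s+\eta,s+2\eta,\dots$ whose length is controlled by the Archimedean property applied to $s'-s$ and $\eta$, and then locate $y$ within one cell by repeated appeals to Cotransitivity. The delicacy is entirely constructive: the cell-location must be carried out without the law of excluded middle, so the overlapping disjunctions produced by Cotransitivity have to be organized by a finite search that commits only to decidable comparisons. A second subtlety is that $A$ need not be order-dense, so arbitrarily small step sizes $\eta$ may be unavailable; here I expect almost density to rescue the argument, since it forbids any point of $X$ from lying strictly between two consecutive elements of $A$, which is exactly what is needed to drive the attainable width below $c'-c$ (taking width $0$ when $A$ is discrete). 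Once the approximation lemma is in hand, the squeeze of the previous paragraph closes the contradiction and the theorem follows.
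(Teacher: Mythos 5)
The paper does not actually print a proof of this theorem (it is quoted from \cite{key-5}), so your argument can only be measured against the machinery the paper sets up around it. Your first two steps are correct and well executed: the agreement of the two additions when one argument lies in $A$ follows, as you say, from almost density plus cancellation, and the reduction of the general case to the statement that \emph{every} pair of bounds $b\leq y\leq b'$ in $A$ satisfies $0<c'-c\leq b'-b$ is sound; the uses of transitivity of $\leq$, cotransitivity, and negative antisymmetry are all constructively legitimate.

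The gap is exactly where you place it, and the method you sketch does not close it. A grid of spacing $\eta$ together with cotransitivity traps $y$ only in an interval of width $2\eta$ in the worst case (from the finite list of overlapping alternatives you can never rule out landing astride a grid point), and an Archimedean ordered abelian group need not contain arbitrarily small positive elements, nor need it permit halving, so ``$2\eta<c'-c$'' may simply be unachievable: for $A=\mathbb{Z}$ your parenthetical ``taking width $0$ when $A$ is discrete'' amounts to the unproved claim $y\in A$, which does not follow from cotransitivity and a grid. The rescue you expect from almost density is real, but it runs through the paper's own apparatus rather than a refined search: the set $L_{y}=\left\{ a\in A:a\leq y\right\}$ is supable --- nonempty and bounded above by bicofinality, and upper order located because, given $p<q$ in $A$, cotransitivity yields $p<y$ or $y<q$, and almost density then produces either an element of $L_{y}$ above $p$ or an upper bound of $L_{y}$ below $q$ --- and almost density also shows that every upper bound of $L_{y}$ in $A$ lies above $y$ itself. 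Lemma \ref{lem: 2}(1) then hands you, for each $\epsilon>0$ in $A$, some $a\in L_{y}$ and an upper bound $u$ of $L_{y}$ with $u-a<\epsilon$, i.e.\ precisely the bounds $a\leq y\leq u$ of width less than $c'-c$ that your squeeze requires (and in the discrete case it forces $y\in A$ outright). With the approximation lemma obtained this way rather than from cotransitivity alone, your outline does complete the proof.
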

To define an admissible addition on an ordered set $X$ containing
an Archimedean ordered abelian group $A$ as an almost dense, bicofinal
subset, we need this lemma, which is proved in \cite{key-5} (Lemmas
59 \& 60):
\begin{lem}
Let $A$ be an Archimedean ordered abelian group. \label{lem: 2}
\begin{enumerate}
\item A subset $S$ of $A$ is supable if and only if, for each $\epsilon>0$
in $A$ and for each positive integer $m$, there exist $a\in S$
and an upper bound $u$ of $S$ in $A$ such that $m\left(u-a\right)<\epsilon$. 
\item The sum of two supable subsets of $A$ is supable. 
\end{enumerate}
\end{lem}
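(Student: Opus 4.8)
The plan is to prove both parts of Lemma~\ref{lem: 2} using the Archimedean property to translate between the abstract "upper order located" condition and a concrete $\epsilon$-$m$ approximation condition, and then to leverage that characterization to handle the sum.

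For part~(1), I would argue both directions. For the forward direction, suppose $S$ is supable, so it is nonempty, bounded above, and upper order located. Given $\epsilon>0$ and a positive integer $m$, I would set up an auxiliary inequality by exploiting the Archimedean property: since $\epsilon/m$ (or rather, some group element comparable to it) is positive, I can find a pair of nearby points straddling the "boundary" of $S$. Concretely, the plan is to choose two elements $x<y$ in $A$ with $m(y-x)<\epsilon$ (such a fine pair exists by Archimedeanness applied to subdivide a fixed positive element), and then feed $x<y$ into the upper-order-located hypothesis. This yields either $x<s$ for some $s\in S$ or $u<y$ for some upper bound $u$; by walking along a sufficiently fine chain of such pairs between a known lower witness and a known upper bound, I can locate an $a\in S$ and an upper bound $u$ with $u-a$ small, giving $m(u-a)<\epsilon$. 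For the converse, I would assume the $\epsilon$-$m$ condition and verify upper order locatedness directly: given $x<y$, apply cotransitivity and the approximation condition with $\epsilon$ chosen smaller than $y-x$ (scaled appropriately), so that the approximating $a\in S$ and upper bound $u$ are forced to fall on the correct sides of $x$ and $y$.

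For part~(2), I would use the characterization from part~(1) as the main tool. Let $S$ and $T$ be supable, and let $S+T=\{s+t:s\in S,\,t\in T\}$. This set is clearly nonempty and bounded above (by the sum of upper bounds). To show it is supable, I would verify the $\epsilon$-$m$ criterion: given $\epsilon>0$ and $m$, apply part~(1) to $S$ with $\epsilon/2$ (choosing a genuine halving via the group structure, e.g. some $\epsilon'$ with $2m\epsilon'<\epsilon$) to get $a\in S$ and an upper bound $u_S$ with $m(u_S-a)$ small, and similarly get $b\in T$ and $u_T$. Then $a+b\in S+T$, and $u_S+u_T$ is an upper bound of $S+T$; the key estimate is that $m((u_S+u_T)-(a+b))=m(u_S-a)+m(u_T-b)<\epsilon$ by the triangle-type additivity, which uses compatibility of the addition with the order.

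The main obstacle I expect is the forward direction of part~(1): producing, constructively and without excluded middle, an $a\in S$ and upper bound $u$ that are genuinely close, rather than merely "not far." The upper-order-located hypothesis only delivers a disjunction for each probing pair $x<y$, so I cannot simply take infima; instead I must iterate the disjunction along a finite Archimedean subdivision and argue that somewhere in the chain the switch from "below some element of $S$" to "below some upper bound" occurs, pinning $a$ and $u$ within one mesh-width of each other. Making this iteration precise while respecting constructive logic—in particular, avoiding any appeal to a least index where the switch happens—is the delicate point, and I would handle it by a careful induction on the number of subdivision steps rather than by a minimization argument.
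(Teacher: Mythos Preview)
The paper does not actually prove Lemma~\ref{lem: 2}: it is introduced with ``we need this lemma, which is proved in \cite{key-5}'' and no argument is given in the text. So there is no in-paper proof to compare your proposal against.

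Judged on its own, your outline is sound and is the standard constructive route. The identification of the forward implication in part~(1) as the delicate step is exactly right, and handling it by walking along a finite Archimedean subdivision between a witness $s_0\in S$ and an upper bound $u_0$, applying upper-order-locatedness to each adjacent pair and tracking by induction (not by minimisation) where the disjunction flips, is precisely how one does this without excluded middle. For the converse of (1), your sketch works once you exploit the parameter $m$ rather than trying to halve $\epsilon$: with $m=2$ and $\epsilon=y-x$ one gets $2(u-a)<y-x$, which rearranges to $x+(u-a)<y-(u-a)$; a single cotransitivity with $u$ then yields $x<a$ or $u<y$. For part~(2), the same remark applies: in a general Archimedean ordered abelian group there need be no $\epsilon'$ with $2m\epsilon'<\epsilon$, so the clean move is to invoke the characterisation from (1) with parameters $(\epsilon,2m)$ on each of $S$ and $T$ separately, and then add the two inequalities $2m(u_S-a)<\epsilon$ and $2m(u_T-b)<\epsilon$ to obtain $m\bigl((u_S+u_T)-(a+b)\bigr)<\epsilon$. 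Your parenthetical ``choosing a genuine halving via the group structure'' should therefore be read as doubling $m$, not halving $\epsilon$.
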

For an Archimedean ordered abelian group $A$, an admissible addition
on the completion $\overline{A}$ is defined as follows: for $x,y\in\overline{A}$,
consider the supable subsets $L_{x}=\left\{ a\in A:a\leq x\right\} $
and $L_{y}=\left\{ a\in A:a\leq y\right\} $. Since $L_{x}+L_{y}$
is supable by Lemma \ref{lem: 2}(2), we define $x+y$ to be $\sup\left(L_{x}+L_{y}\right)$.

For the additive inversion, we define $-x$ to be $\inf-L_{x}$, for
each $x\in\overline{A}$. 
\begin{lem}
Let $A$ be an Archimedean ordered abelian group, and let $X$ be
an ordered set containing $A$ as an almost dense, bicofinal subset.
Let $x,y\in X$. If, for each $\epsilon>0$ in $A$, there are $u,v\in A$
such that $x,y\in\left[u,v\right]$ and $v-u<\epsilon$, then $x=y$.
\label{lem: 3} (\cite{key-5}, Lemma 56)\end{lem}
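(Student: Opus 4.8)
The plan is to reduce the desired equality to the definition of $=$ supplied by Negative Antisymmetry: it is enough to prove that both $x<y$ and $y<x$ are false. Since the hypothesis on $x$ and $y$ is perfectly symmetric, I would prove that $x<y$ is false and then invoke the identical argument with the roles of $x$ and $y$ exchanged to conclude that $y<x$ is false; Negative Antisymmetry then yields $x=y$.

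To show that $x<y$ is false, I would assume $x<y$ and derive a contradiction. Almost density of $A$ in $X$ turns $x<y$ into $x\leq s<s'\leq y$ for some $s,s'\in A$. The element $s'-s$ of $A$ is strictly positive, because $s<s'$ is equivalent, by compatibility of the group addition with the order, to $0<s'-s$. I would therefore apply the hypothesis with $\epsilon=s'-s>0$, obtaining $u,v\in A$ with $u\leq x\leq v$, $u\leq y\leq v$, and $v-u<s'-s$.

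The contradiction is obtained by trapping $s'-s$ inside $v-u$. Transitivity of $\leq$ applied to $u\leq x\leq s$ gives $u\leq s$, and applied to $s'\leq y\leq v$ gives $s'\leq v$; adding these two relations yields $s'-s\leq v-u$. This directly contradicts $v-u<s'-s$. Hence $x<y$ is false, and by the symmetric argument $y<x$ is false, so $x=y$.

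I expect the only genuine obstacle to be the constructive justification of the three order-arithmetic facts used above---that $\leq$ is transitive, that $\leq$ is preserved under addition, and that $s<s'$ forces $0<s'-s$---since none of them is automatic without excluded middle. Each, however, follows from Cotransitivity together with compatibility of the addition with the order. For transitivity, if $u\leq x$ and $x\leq s$ but $s<u$, then Cotransitivity applied to $s<u$ gives $s<x$ or $x<u$, each contradicting a hypothesis. For monotonicity, if $u\leq s$ and $s'\leq v$ but $v-u<s'-s$, then $s+v<u+s'$, and Cotransitivity splits this strict inequality into $s+v<s+s'$ or $s+s'<u+s'$, i.e. $v<s'$ or $s<u$, each again contradicting an assumed relation. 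With these routine verifications in hand, the squeeze closes at once.
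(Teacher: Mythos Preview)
Your argument is correct. The paper itself does not supply a proof of this lemma---it is deferred to the dissertation~\cite{key-5}---so there is no in-paper proof to compare against directly. That said, your strategy matches exactly the template the author uses for the closely related Lemma~\ref{lem: 9}: assume $x<y$, invoke almost density to insert $s<s'$ from the dense subset between $x$ and $y$, and extract a contradiction; then conclude by symmetry and Negative Antisymmetry. The only additional ingredient your proof needs over Lemma~\ref{lem: 9} is the squeeze $s'-s\leq v-u$, which you obtain cleanly from the group structure on $A$. Your closing paragraph, verifying constructively that $\leq$ is transitive and additive via Cotransitivity, is appropriate and matches the level of care the paper takes elsewhere (cf.\ Lemma~\ref{lem: 5}). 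One small point worth making explicit: the inequalities $u\leq x\leq s$ and $s'\leq y\leq v$ live in $X$, while the arithmetic $s'-s\leq v-u$ takes place in $A$; this is harmless because the embedding of $A$ into $X$ reflects $<$ and hence $\leq$, but it is the one place where the two ordered sets interact.
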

\begin{thm}
Let $A$ be an Archimedean ordered abelian group. Then $\overline{A}$
is an Archimedean ordered abelian group. \label{thm: 3} (\cite{key-5},
Theorem 62).
\end{thm}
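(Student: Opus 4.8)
The plan is to take the ordered-set structure of $\overline{A}$, the admissible addition $x+y=\sup(L_{x}+L_{y})$, and the inversion $-x=\inf(-L_{x})$ as already in hand, and to verify the abelian-group axioms together with the Archimedean property directly. The engine behind every verification is one approximation fact, which I will call a \emph{rational sandwich}: for each $x\in\overline{A}$ and each $\epsilon>0$ in $A$ there are $a,u\in A$ with $a\leq x\leq u$ and $u-a<\epsilon$. To produce it I would apply Lemma \ref{lem: 2}(1) with $m=1$ to the supable set $L_{x}$, obtaining $a\in L_{x}$ and an upper bound $u\in A$ of $L_{x}$ with $u-a<\epsilon$; then $a\leq x$ is immediate, and $x\leq u$ follows from almost density, since $u<x$ would give some $s'\in A$ with $u<s'\leq x$, i.e. $s'\in L_{x}$ with $s'>u$, contradicting that $u$ bounds $L_{x}$.

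Next I would show that sandwiches propagate through the operations. Because the addition is admissible, it extends the addition of $A$ and is compatible with the order, so from $a\leq x\leq u$ and $b\leq y\leq v$ (all endpoints in $A$) one gets $a+b\leq x+y\leq u+v$, with $a+b$ and $u+v$ computed in $A$; likewise $a\leq x\leq u$ yields $-u\leq -x\leq -a$, because $-a\in-L_{x}$ forces $\inf(-L_{x})\leq -a$, while $-u$ is a lower bound of $-L_{x}$. Commutativity needs no sandwich at all: since $A$ is abelian, $L_{x}+L_{y}=L_{y}+L_{x}$ as subsets of $A$, whence $x+y=y+x$. For the identity, sandwich $x$ by $a\leq x\leq u$ with $u-a<\epsilon$; compatibility gives $a\leq x+0\leq u$, so both $x$ and $x+0$ lie in $[a,u]$, and as $\epsilon$ was arbitrary Lemma \ref{lem: 3} forces $x+0=x$.

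The inverse law and associativity are where the machine does its real work, and I expect them to be the main obstacle, precisely because constructively one cannot decide signs by cases and must instead route every equality through Lemma \ref{lem: 3}. For the inverse, a sandwich $a\leq x\leq u$ gives $-u\leq-x\leq-a$, hence $a-u\leq x+(-x)\leq u-a$; since $a\leq u$ this interval contains $0$ and has length $2(u-a)<2\epsilon$, so arbitrariness of $\epsilon$ and Lemma \ref{lem: 3} yield $x+(-x)=0$. For associativity, sandwich $x,y,z$ simultaneously by $a\leq x\leq u$, $b\leq y\leq v$, $c\leq z\leq w$ with each gap below $\epsilon/3$; propagating through the two groupings and using associativity in $A$ places both $(x+y)+z$ and $x+(y+z)$ in $[a+b+c,\,u+v+w]$, an interval of length $<\epsilon$, so Lemma \ref{lem: 3} identifies them.

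Finally, for the Archimedean property, given $x,y>0$ in $\overline{A}$ I would extract positive elements of $A$ straddling them: almost density applied to $0<y$ yields $s'\in A$ with $0<s'\leq y$, and a sandwich of $x$ yields $u\in A$ with $0<x\leq u$. Since $A$ is Archimedean there is a positive integer $n$ with $u\leq ns'$, and order compatibility promotes $s'\leq y$ to $ns'\leq ny$; chaining gives $x\leq u\leq ns'\leq ny$, so $x\leq ny$. The one genuinely delicate point throughout is that every equality comes from Lemma \ref{lem: 3} rather than from a dichotomy, which is exactly what keeps the argument constructive; the admissibility of the defined operation (order compatibility and extension of the addition on $A$), if it were not taken as established alongside the definition, would be the remaining technical burden, and it too would be proved by the same sandwiching together with almost density.
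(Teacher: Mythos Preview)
Your proposal is correct and follows precisely the route the paper sets up: the paper itself does not give a proof of Theorem~\ref{thm: 3} but defers it to \cite{key-5}, after introducing exactly the ingredients you use---Lemma~\ref{lem: 2}(1) for the rational sandwich, Lemma~\ref{lem: 3} for collapsing small intervals to equalities, and the definitions $x+y=\sup(L_{x}+L_{y})$ and $-x=\inf(-L_{x})$ together with their admissibility. Your verifications of the group axioms and the Archimedean property by sandwiching and invoking Lemma~\ref{lem: 3} are sound and constitute the intended argument.
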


\subsection{Commutative ordered monoids}

We call a multiplication on an ordered set $X$ with a distinguished
element $0$ \textit{preadmissible }if, for each $x,y,z$ in $X$,
the following conditions hold:
\begin{enumerate}
\item $0x=0=x0$;
\item if $x<y$ and $z>0$, then $xz<yz$;
\item if $x<y$ and $z<0$, then $xz>yz$.\end{enumerate}
\begin{lem}
Let $X$ be an ordered set and $x,y,x',y'\in X$: \label{lem: 5}
\begin{enumerate}
\item If $x<y$ implies $x'<y'$, and $x=y$ implies $x'=y'$, then $x\leq y$
implies $x'\leq y'$.
\item If $x<y$ implies $x'>y'$, and $x=y$ implies $x'=y'$, then $x\leq y$
implies $x'\geq y'$. 
\end{enumerate}
\end{lem}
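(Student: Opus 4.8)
The plan is to prove both parts by refutation, working entirely within the constructive discipline: to establish a relation of the form $p\leq q$ (that is, $q<p$ is false), I will assume the strict inequality $q<p$ and derive an absurdity. The engine in each case is Negative Antisymmetry: once I have forced both $x<y$ and $y<x$ to be false, I obtain $x=y$ and can invoke the equality hypothesis. A preliminary observation I will record is that $<$ is irreflexive, since setting $x=y$ in Asymmetry shows that $x<x$ implies $x<x$ is false, whence $x<x$ is false for every $x$.

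For part (1), suppose the hypotheses hold and that $x\leq y$, i.e.\ $y<x$ is false; I assume $y'<x'$ and seek a contradiction. First I show $x<y$ is false: if $x<y$ held, the first hypothesis would give $x'<y'$, but Asymmetry applied to $x'<y'$ makes $y'<x'$ false, contradicting the standing assumption. Thus $x<y$ is false. Combined with $y<x$ false, Negative Antisymmetry yields $x=y$, so the second hypothesis gives $x'=y'$. Substituting into $y'<x'$ produces $y'<y'$, which irreflexivity forbids. Hence $y'<x'$ is false, i.e.\ $x'\leq y'$.

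Part (2) runs along the same lines with the roles of the primed inequalities reversed. Assuming $x\leq y$, I assume $x'<y'$ toward a contradiction. If $x<y$ held, the first hypothesis would give $x'>y'$, that is $y'<x'$, and Asymmetry would then make $x'<y'$ false, a contradiction; so $x<y$ is false. With $y<x$ false as well, Negative Antisymmetry gives $x=y$, the equality hypothesis gives $x'=y'$, and again substitution into $x'<y'$ contradicts irreflexivity. Therefore $x'<y'$ is false, i.e.\ $x'\geq y'$.

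The only real subtlety is constructive bookkeeping: I must never case-split on whether $x<y$ holds, since that would smuggle in excluded middle. Instead each argument refutes $x<y$ outright from the assumed strict inequality on the primes, and it is precisely Negative Antisymmetry that lets two such refutations ($x<y$ false and $y<x$ false) be promoted to the equality $x=y$. Once that equality is in hand, the passage to the contradiction via the equality hypothesis and irreflexivity is immediate.
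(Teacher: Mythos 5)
Your proof is correct and follows essentially the same route as the paper's: assume $x\leq y$ and the offending strict inequality on the primes, refute $x<y$ via asymmetry, apply negative antisymmetry to get $x=y$ and hence $x'=y'$, and conclude with irreflexivity. The only cosmetic difference is that you spell out the derivation of irreflexivity from asymmetry, which the paper leaves implicit.
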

\begin{proof}
(1) Suppose $x\leq y$ and $y'<x'$. If $x<y$, then $x'<y'$, which
is false by asymmetry, so $y\leq x$. By negative antisymmetry, $x=y$,
so $x'=y'$, implying $x'<x'$, which is false by asymmetry.

(2) The proof goes as in (1).\end{proof}
\begin{thm}
Let $X$ be an ordered set with a distinguished element $0$ and a
preadmissible multiplication. For each $x,y,z$ in $X$, \label{thm: 40}
\begin{enumerate}
\item if $x\leq y$ and $z>0$, then $xz\leq yz$;
\item if $x\leq y$ and $z<0$, then $xz\geq yz$.
\end{enumerate}
\end{thm}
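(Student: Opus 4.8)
The plan is to read off both parts directly from Lemma \ref{lem: 5}, with the two preadmissibility conditions supplying exactly the strict-inequality hypotheses that lemma requires. The genuinely constructive work—passing from a strict statement to its $\leq$-version without splitting into the cases $x<y$ and $x=y$ via excluded middle—has already been absorbed into Lemma \ref{lem: 5}. So here there is essentially nothing left to do but instantiate that lemma correctly and match up its hypotheses.

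For part (1), I would fix $z>0$ and apply Lemma \ref{lem: 5}(1) with the substitution $x'=xz$ and $y'=yz$. The first hypothesis of Lemma \ref{lem: 5}(1), that $x<y$ implies $xz<yz$, is precisely condition (2) of preadmissibility for this fixed $z>0$. The second hypothesis, that $x=y$ implies $xz=yz$, is just the fact that multiplication is a well-defined operation, so equal inputs yield equal outputs. Lemma \ref{lem: 5}(1) then delivers $xz\leq yz$ from $x\leq y$, which is the claim.

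For part (2), I would fix $z<0$ and apply Lemma \ref{lem: 5}(2) in the same way, again taking $x'=xz$ and $y'=yz$. This time the first hypothesis, that $x<y$ implies $xz>yz$, is exactly condition (3) of preadmissibility, while the second hypothesis is again the well-definedness of multiplication. Lemma \ref{lem: 5}(2) then gives $xz\geq yz$ from $x\leq y$.

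Since the whole argument is merely an instantiation of Lemma \ref{lem: 5}, I do not expect any real obstacle; the only points worth flagging are that condition (1) of preadmissibility (the identities $0x=0=x0$) is never needed, and that constructivity is inherited wholesale from Lemma \ref{lem: 5}, so that we at no stage appeal to trichotomy or excluded middle.
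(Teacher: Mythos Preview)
Your proposal is correct and matches the paper's proof essentially line for line: fix the sign of $z$, verify the two hypotheses of Lemma \ref{lem: 5} (the strict implication from preadmissibility condition (2) or (3), and the equality case from well-definedness of multiplication), then invoke the appropriate part of Lemma \ref{lem: 5}. Your additional remarks about condition (1) being unused and constructivity being inherited from the lemma are accurate and in the spirit of the paper's own commentary following the theorem.
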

\begin{proof}
(1) Suppose $z>0$. If $x<y$, then $xz<yz$, because of condition
$2$ for a preadmissible multiplication. Also, if $x=y$, then $xz=yz$
because multiplication, as a function on $X\times X$, is well defined.
Hence, $x\leq y$ implies $xz\leq yz$ by Lemma \ref{lem: 5}(1).

(2) Suppose $z<0$. If $x<y$, then $xz>yz$ because of condition
$3$ for a preadmissible multiplication. Also, if $x=y$, then $xz=yz$.
Hence, $x\leq y$ implies $xz\geq yz$, by Lemma\ref{lem: 5}(2).\end{proof}
\begin{rem*}
In the proof of Theorem \ref{thm: 40}(1), we have shown 
\[
z>0\Rightarrow\left[\left(\left(x<y\Rightarrow xz<yz\right)\:\textrm{and}\:\left(x=y\Rightarrow xz=yz\right)\right)\Rightarrow\left(x\leq y\Rightarrow xz\leq yz\right)\right],
\]

which is equivalent to condition $2$ for a preadmissible multiplication
implying Theorem \ref{thm: 40}(1). That move is a special form of
the general reasoning law: 
\[
C\Rightarrow\left[\left(A\Rightarrow B\right)\Rightarrow\left(A'\Rightarrow B'\right)\right]\:\textrm{is equivalent to}\:\left((C\:\textrm{and}\: A)\Rightarrow B\right)\Rightarrow\left((C\:\textrm{and}\: A')\Rightarrow B'\right).
\]
\end{rem*}
\begin{thm}
Let $X$ be an ordered set with a distinguished element $0$ and a
preadmissible multiplication. Let $a,b,x,y$ be in $X$ with $a\leq x\leq b$.
Then\label{thm: 2-1}
\begin{enumerate}
\item if $l$ is a lower bound of $\left\{ ay,by\right\} $ and $u$ is
an upper bound of $\left\{ ay,by\right\} $, then $l\leq xy\leq u$;
\item if $l'$ is a lower bound of $\left\{ ya,yb\right\} $ and $u'$ is
an upper bound of $\left\{ ya,yb\right\} $, then $l'\leq yx\leq u'$.
\end{enumerate}
\end{thm}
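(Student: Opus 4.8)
The plan is to prove part (1) by establishing the two inequalities $l\leq xy$ and $xy\leq u$ separately, each by reductio. Since $\leq$ is defined as the negation of the reverse strict inequality, proving $l\leq xy$ amounts to deriving a contradiction from $xy<l$, and proving $xy\leq u$ amounts to deriving a contradiction from $u<xy$. The one genuine obstacle is constructive: we cannot case-split on the sign of $y$ (we may not decide whether $y>0$, $y=0$, or $y<0$). The maneuver that replaces the classical trichotomy is to show that the assumed strict inequality forces $y$ to be simultaneously $\leq 0$ and $\geq 0$, whence $y=0$ by negative antisymmetry, at which point the hypotheses collapse to a forbidden $0<0$.

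In detail, for $l\leq xy$ I would assume $xy<l$. Because $l$ is a lower bound of $\{ay,by\}$ we have $l\leq ay$ and $l\leq by$, so $ay<l$ and $by<l$ are both false; applying cotransitivity to $xy<l$ with the intermediate elements $ay$ and $by$ then yields $xy<ay$ and $xy<by$. Next I claim that $xy<ay$ together with $a\leq x$ forces $y\leq 0$: if instead $y>0$, then Theorem \ref{thm: 40}(1) applied to $a\leq x$ gives $ay\leq xy$, i.e. $xy<ay$ is false, a contradiction; hence $0<y$ is false, that is, $y\leq 0$. Symmetrically, $xy<by$ together with $x\leq b$ forces $0\leq y$: if $y<0$, then Theorem \ref{thm: 40}(2) applied to $x\leq b$ gives $by\leq xy$, contradicting $xy<by$. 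From $y\leq 0$ and $0\leq y$, negative antisymmetry gives $y=0$, and then condition (1) for a preadmissible multiplication gives $xy=0=ay$, so $xy<ay$ reads $0<0$, which is false by asymmetry. This contradiction shows $xy<l$ is impossible, i.e. $l\leq xy$.

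The inequality $xy\leq u$ is handled by the mirror-image argument: assume $u<xy$, use $ay\leq u$ and $by\leq u$ with cotransitivity to obtain $ay<xy$ and $by<xy$, then show that $ay<xy$ with $a\leq x$ forces $0\leq y$ (via Theorem \ref{thm: 40}(2), ruling out $y<0$) while $by<xy$ with $x\leq b$ forces $y\leq 0$ (via Theorem \ref{thm: 40}(1), ruling out $y>0$); again $y=0$, and $ay<xy$ becomes $0<0$, a contradiction. Together with the previous paragraph this completes part (1).

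For part (2) the statement is identical with the factor $y$ moved to the left, so the same argument applies once we use the left-multiplication monotonicity corresponding to Theorem \ref{thm: 40}; in the present commutative ordered monoid setting this is just commutativity, giving $yx=xy$, $ya=ay$, and $yb=by$, so that part (2) reduces verbatim to part (1). I expect no difficulty here beyond the constructive squeezing of $y$ between $0$ and $0$ already described, which is the crux that makes the two cotransitivity-derived strict inequalities cooperate.
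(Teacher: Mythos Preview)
Your argument for part (1) is essentially the paper's own proof: assume $xy<l$, obtain $xy<ay$ and $xy<by$ via cotransitivity, rule out $y>0$ and $y<0$ using Theorem~\ref{thm: 40}, conclude $y=0$ by negative antisymmetry, and reach $0<0$. The only difference is that you spell out the cotransitivity step explicitly, whereas the paper simply asserts $xy<ay$ and $xy<by$ directly.

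For part (2) there is a small slip. You write that ``in the present commutative ordered monoid setting'' one may reduce to (1) via $yx=xy$, but the hypotheses of the theorem are only that $X$ is an ordered set with a distinguished $0$ and a preadmissible multiplication; no commutativity (and no monoid structure) is assumed, so the identities $yx=xy$, $ya=ay$, $yb=by$ are not available. The paper does not reduce (2) to (1) by commutativity; it says ``The proof goes as in (1),'' meaning one reruns the same reductio with the factor $y$ on the left, using left-sided monotonicity in place of Theorem~\ref{thm: 40}. (Strictly speaking, the preadmissibility conditions as stated concern right multiplication only, so the paper is tacitly relying on the obvious left-sided analogues; but that is the intended route, not commutativity.) If you drop the appeal to commutativity and instead mirror the argument on the left, your proof of (2) lines up with the paper's.
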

\begin{proof}
(1) If $xy<l$, then $xy<ay$ and $xy<by$. If $y>0$, then $ay\leq xy\leq by$
by Theorem \ref{thm: 40}(1), which is false since $xy<ay$, so $y\leq0$.
If $y<0$, then $ay\geq xy\geq by$ by Theorem \ref{thm: 40}(2),
which is false since $xy<by$, so $y\geq0$. Hence, $y=0$, by negative
antisymmetry. But if $y=0$, then $x0<a0$, giving $0<0$, which is
false by asymmetry. Therefore, $l\leq xy$. Now, if $u<xy$, then
$ay<xy$ and $by<xy$. If $y>0$, then $ay\leq xy\leq by$, which
is false, so $y\leq0$. If $y<0$, then $ay\geq xy\geq by$, which
is false, so $y\geq0$. Hence, $y=0$. But if $y=0$, then $a0<x0$,
giving $0<0$, which is false. Therefore, $xy\leq u$.

(2) The proof goes as in (1).\end{proof}
\begin{thm}
Let $a\leq x\leq b$ and $a'\leq y\leq b'$ in $X$. If $l$ is a
lower bound of $\left\{ aa',ab',ba',bb'\right\} $ and $u$ is an
upper bound of $\left\{ aa',ab',ba',bb'\right\} $, then $l\leq xy\leq u$.
\label{thm: 41}\end{thm}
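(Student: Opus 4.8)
The plan is to derive the conclusion from two applications of Theorem \ref{thm: 2-1}, peeling off one variable at a time. Since $a\leq x\leq b$, part (1) of that theorem tells us that $xy$ lies between any lower bound and any upper bound of the two-element set $\{ay,by\}$. So it suffices to show that the given $l$ and $u$ are, respectively, a lower bound and an upper bound of $\{ay,by\}$; the problem thereby reduces to controlling the two products $ay$ and $by$ separately.

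To control these, I would apply Theorem \ref{thm: 2-1}(2) to the second chain $a'\leq y\leq b'$, once with left factor $a$ and once with left factor $b$. Taking left factor $a$, part (2) says that $ay$ lies between any bounds of $\{aa',ab'\}$; taking left factor $b$, it says that $by$ lies between any bounds of $\{ba',bb'\}$. This is precisely where the separate statements (1) and (2) of Theorem \ref{thm: 2-1} earn their keep: since commutativity is not assumed, I must track on which side each factor sits, using the left-multiplication version here for the inner variable $y$ and the right-multiplication version above for the outer variable $x$.

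It then remains to assemble the bounds. Because $l$ is a lower bound of all four corner products $aa',ab',ba',bb'$, it is in particular a lower bound of $\{aa',ab'\}$ and of $\{ba',bb'\}$, so the previous step gives $l\leq ay$ and $l\leq by$; hence $l$ is a lower bound of $\{ay,by\}$. Symmetrically, $u$ is an upper bound of $\{ay,by\}$. Feeding these two facts into Theorem \ref{thm: 2-1}(1) yields $l\leq xy\leq u$, as required.

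I do not expect a genuine obstacle: every step is a direct invocation of Theorem \ref{thm: 2-1}, and the constructive content has already been discharged in that theorem's proof (which itself rests on Theorem \ref{thm: 40} and Lemma \ref{lem: 5}). The only point demanding care is the bookkeeping of the two sides of each product, namely ensuring that part (1) is applied to the outer factor $x$ and part (2) to the inner factor $y$, so that the four products generated are exactly $aa',ab',ba',bb'$.
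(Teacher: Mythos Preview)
Your proof is correct and follows essentially the same approach as the paper: two applications of Theorem \ref{thm: 2-1}(2) show that $l$ and $u$ bound $\{ay,by\}$, and then Theorem \ref{thm: 2-1}(1) finishes. The paper presents the argument for the upper bound and dispatches the lower bound with ``similarly,'' but the logical structure is identical to yours.
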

\begin{proof}
Since $u$ is an upper bound of $\left\{ aa',ab',ba',bb'\right\} $,
$u$ is also an upper bound of $\left\{ aa',ab'\right\} $ and an
upper bound of $\left\{ ba',bb'\right\} $. By Theorem \ref{thm: 2-1}(2),
$ay\leq u$ and $by\leq u$, so $u$ is an upper bound of $\left\{ ay,by\right\} $.
Therefore, $xy\leq u$, by Theorem \ref{thm: 2-1}(1). Similarly,
$l\leq xy$.
\end{proof}
By a \textit{commutative ordered monoid} $M$ with a distinguished
element $0$, we mean a monoid such that $xy=yx$, and $0<x$ and
$0<y$ imply $0<xy$. We call $M$ \textit{locally bounded} if every
finitely enumerable subset of $M$ has a minimum element and a maximum
element. 

For an ordered set $X$ containing a locally bounded commutative ordered
monoid $M$ with a distinguished element $0$, as an almost dense,
bicofinal subset, and satisfying $0\leq1$, a multiplication on $X$
is \textit{admissible} if 
\begin{enumerate}
\item the multiplication on $X$ is preadmissible;
\item the multiplication on $X$ extends the multiplication on $M$;
\item for $c,d\in M$, $c\leq xy\leq d$ implies there are $a,b,a',b'\in M$
such that $a\leq x\leq b$, $a'\leq y\leq b'$, and $c\leq\min\left\{ aa',ab',ba',bb'\right\} \leq xy\leq\max\left\{ aa',ab',ba',bb'\right\} \leq d$.
\end{enumerate}
In what remains, we will assume that $0\leq1$ in $M$.
\begin{lem}
Let $X$ be an ordered set containing a locally bounded commutative
ordered monoid $M$ with a distinguished element $0$, as an almost
dense, bicofinal subset. Let $X$ have a preadmissible multiplication.
Let $x,y$ be in $X$ and let $a,b,a',b'$ be in $M$. If $a\leq x\leq b$
and $a'\leq y\leq b'$, then $\min\left\{ aa',ab',ba',bb'\right\} \leq xy\leq\max\left\{ aa',ab',ba',bb'\right\} $.
\label{lem: 58}\end{lem}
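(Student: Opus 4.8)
The plan is to recognize that this lemma is almost immediate from Theorem \ref{thm: 41}; the only genuinely new hypothesis here is that $M$ is locally bounded, and that hypothesis is present precisely to guarantee, constructively, that the extremal bounds $\min\left\{ aa',ab',ba',bb'\right\}$ and $\max\left\{ aa',ab',ba',bb'\right\}$ actually exist. So the whole strategy is to manufacture these two bounds and then feed them into Theorem \ref{thm: 41}.

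First I would observe that the four products $aa',ab',ba',bb'$ are elements of $M$, since $M$ is a monoid closed under its multiplication, so that $S=\left\{ aa',ab',ba',bb'\right\}$ is a finitely enumerable subset of $M$: it is nonempty and is the image of $\left\{ 1,2,3,4\right\}$ under the obvious surjection. This is the step where constructivity matters, because one cannot in general single out the least or greatest of finitely many elements of an ordered set without decidable comparisons; it is exactly here that I would invoke local boundedness of $M$ to produce a minimum element $l:=\min S$ and a maximum element $u:=\max S$, both lying in $M\subseteq X$. Next, by the defining property of a minimum, $l\leq s$ for every $s\in S$, so $l$ is a lower bound of $S$; dually $u$ is an upper bound of $S$. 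Since $a\leq x\leq b$ and $a'\leq y\leq b'$ are already assumed, the hypotheses of Theorem \ref{thm: 41} are met for this particular $l$ and $u$, and that theorem yields $l\leq xy\leq u$, which is exactly the claimed inequality.

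I do not expect any serious obstacle: the substantive work of trapping $xy$ between products of the corner points has already been done in Theorems \ref{thm: 2-1} and \ref{thm: 41}. The only point to handle with care is the constructive one flagged above, namely that without local boundedness the symbols $\min$ and $\max$ would not even be meaningful. Accordingly, the proof should make explicit that local boundedness is precisely what upgrades the ``arbitrary lower/upper bound'' formulation of Theorem \ref{thm: 41} into the sharp min/max formulation stated here.
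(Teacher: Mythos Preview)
Your proposal is correct and follows essentially the same route as the paper: observe that $\min\{aa',ab',ba',bb'\}$ is a lower bound and $\max\{aa',ab',ba',bb'\}$ an upper bound of $\{aa',ab',ba',bb'\}$, then invoke Theorem~\ref{thm: 41}. The paper's proof is a single sentence doing exactly this; your additional remarks on why local boundedness is needed to guarantee the constructive existence of the min and max are accurate elaborations that the paper leaves implicit.
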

\begin{proof}
Since $\min\left\{ aa',ab',ba',bb'\right\} $ is a lower bound of
$\left\{ aa',ab',ba',bb'\right\} $ and $\max\left\{ aa',ab',ba',bb'\right\} $
is an upper bound of $\left\{ aa',ab',ba',bb'\right\} $, 
\[
\min\left\{ aa',ab',ba',bb'\right\} \leq xy\leq\max\left\{ aa',ab',ba',bb'\right\} ,
\]

by Theorem \ref{thm: 41}.\end{proof}
\begin{lem}
Let $X$ be an ordered set containing an almost dense, bicofinal subset
$M$. Let $x,y\in X$. If, for all $a,b\in M$, $a\leq x\leq b$ if
and only if $a\leq y\leq b$, then $x=y$. \label{lem: 9}\end{lem}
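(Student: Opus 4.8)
The plan is to obtain $x=y$ from negative antisymmetry, so the whole argument reduces to refuting both $x<y$ and $y<x$. Constructively this is exactly the right target: we cannot appeal to trichotomy to decide which of the three alternatives holds, but we can drive each strict inequality to an explicit contradiction, and negative antisymmetry then delivers the conclusion from ``$x<y$ is false'' together with ``$y<x$ is false.''

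First I would assume $x<y$. Almost density of $M$ supplies $s,s'\in M$ with $x\leq s<s'\leq y$, and bicofinality supplies a lower bound $p\in M$ with $p\leq x$. Then $p\leq x\leq s$, so one direction of the hypothesis (applied with $a=p$, $b=s$) gives $p\leq y\leq s$; in particular $y\leq s$, that is, $s<y$ is false. On the other hand, cotransitivity applied to $s<s'$ yields $s<y$ or $y<s'$, and since $s'\leq y$ makes $y<s'$ false, we are left with $s<y$ --- the desired contradiction. Hence $x<y$ is false.

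The second half is the mirror image. Assuming $y<x$, almost density gives $t,t'\in M$ with $y\leq t<t'\leq x$ and bicofinality a lower bound $p'\in M$ with $p'\leq y$, so $p'\leq y\leq t$; now the \emph{other} direction of the hypothesis (applied with $a=p'$, $b=t$) yields $p'\leq x\leq t$, whence $x\leq t$, that is, $t<x$ is false. But cotransitivity on $t<t'$ together with $t'\leq x$ (which makes $x<t'$ false) forces $t<x$, a contradiction. Thus $y<x$ is also false, and negative antisymmetry gives $x=y$.

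The only place demanding care --- and the reason the biconditional, rather than a single implication, is the natural hypothesis --- is the symmetric bookkeeping of the two steps: each strict inequality must be split by almost density into a gap $s<s'$ lying between $x$ and $y$, the missing endpoint of an $M$-interval must be produced by bicofinality, and cotransitivity must then be used to upgrade the weak ``$s'\leq y$'' into the strict ``$s<y$'' that clashes with the located estimate coming from the hypothesis. No step is computationally substantial; the real discipline is keeping each $\leq$ and $<$ pointed in the right direction and invoking only asymmetry and cotransitivity in place of the excluded middle.
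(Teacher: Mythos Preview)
Your proof is correct and follows essentially the same route as the paper: assume $x<y$, use almost density and bicofinality to sandwich $x$ in an $M$-interval, apply the hypothesis to trap $y$ in the same interval, and derive a contradiction via cotransitivity; then argue symmetrically and finish with negative antisymmetry. The only cosmetic difference is that the paper picks both a lower and an upper bicofinal witness at once (writing $a\leq x\leq c<d\leq y\leq b$) and records two contradictions ($c<x$ and $y<d$) rather than one, whereas you economize by using just a lower bound and a single contradiction per half --- but the underlying argument is the same.
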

\begin{proof}
If $x<y$, then $a\leq x\leq c<d\leq y\leq b$ for some $a,c,d,b\in M$,
since $M$ is almost dense and bicofinal in $X$, so $c<x$ and $y<d$,
which is impossible, by asymmetry. Therefore, $y\leq x$. Symmetrically,
$x\leq y$. Hence, $x=y$, by negative antisymmetry.\end{proof}
\begin{thm}
Let $X$ be an ordered set containing a locally bounded commutative
ordered monoid $M$ with a distinguished element $0$, as an almost
dense, bicofinal subset. Any two admissible multiplications on $X$
are equal.\end{thm}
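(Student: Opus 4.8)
The plan is to write the two admissible multiplications as $\cdot$ and $\ast$, fix $x,y\in X$, and deduce $x\cdot y=x\ast y$ from Lemma~\ref{lem: 9} applied to the pair $x\cdot y,\,x\ast y$. Since $M$ is almost dense and bicofinal in $X$, it suffices to show that for all $c,d\in M$ we have $c\leq x\cdot y\leq d$ if and only if $c\leq x\ast y\leq d$; Lemma~\ref{lem: 9} then forces $x\cdot y=x\ast y$.

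First I would prove the forward implication. Assume $c\leq x\cdot y\leq d$ with $c,d\in M$. Because $\cdot$ is admissible, condition~(3) supplies $a,b,a',b'\in M$ with $a\leq x\leq b$, $a'\leq y\leq b'$, and
\[
c\leq\min\left\{ aa',ab',ba',bb'\right\} \leq x\cdot y\leq\max\left\{ aa',ab',ba',bb'\right\} \leq d.
\]
The crucial point is that the four products $aa',ab',ba',bb'$ are computed in $M$, so by condition~(2)---both multiplications extend the one on $M$---they, and hence their minimum and maximum, have the same values for $\cdot$ and for $\ast$. Since $\ast$ is preadmissible, Lemma~\ref{lem: 58} applied to $a\leq x\leq b$ and $a'\leq y\leq b'$ gives
\[
\min\left\{ aa',ab',ba',bb'\right\} \leq x\ast y\leq\max\left\{ aa',ab',ba',bb'\right\} ,
\]
and chaining the inequalities yields $c\leq x\ast y\leq d$.

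The converse implication follows by the symmetric argument, swapping the roles of $\cdot$ and $\ast$: I would invoke condition~(3) for $\ast$ and Lemma~\ref{lem: 58} for the preadmissible multiplication $\cdot$. Hence the two sandwiching conditions are equivalent for every $c,d\in M$, and Lemma~\ref{lem: 9} delivers $x\cdot y=x\ast y$.

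The step carrying the real weight will be the use of admissibility condition~(3): it is exactly the hypothesis that converts the abstract bound $c\leq x\cdot y\leq d$ into genuine monoid bounds $a,b,a',b'$ surrounding $x$ and $y$. Everything afterward is bookkeeping---Lemma~\ref{lem: 58}, a consequence of preadmissibility alone, transports those bounds to the other multiplication, and the agreement of the $M$-products makes the sandwiching values common to both. The symmetry between $\cdot$ and $\ast$ means the argument need only be carried out once.
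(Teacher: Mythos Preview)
Your proposal is correct and follows essentially the same route as the paper: invoke admissibility condition~(3) for one multiplication to produce monoid bounds $a,b,a',b'$, apply Lemma~\ref{lem: 58} (preadmissibility) to transfer the sandwich to the other multiplication, and conclude by Lemma~\ref{lem: 9}. Your explicit remark that the four products $aa',ab',ba',bb'$ coincide for both multiplications because each extends the monoid multiplication is a point the paper leaves tacit, but otherwise the arguments match step for step.
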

\begin{proof}
For two admissible multiplications $\hat{\cdot}$ and $\tilde{\cdot}$
on $X$, suppose for all $c,d\in M$, $c\leq x\hat{\cdot}y\leq d$.
Then there are $a,b,a',b'\in M$ such that $a\leq x\leq b$, $a'\leq y\leq b'$,
and $c\leq\min\left\{ aa',ab',ba',bb'\right\} \leq x\hat{\cdot}y\leq\max\left\{ aa',ab',ba',bb'\right\} \leq d$.
Therefore, $c\leq\min\left\{ aa',ab',ba',bb'\right\} \leq x\tilde{\cdot}y\leq\max\left\{ aa',ab',ba',bb'\right\} \leq d$
by Lemma \ref{lem: 58}, so $c\leq x\tilde{\cdot}y\leq d$. Similarly,
$c\leq x\tilde{\cdot}y\leq d$ implies $c\leq x\hat{\cdot}y\leq d$.
Hence, $x\hat{\cdot}y=x\tilde{\cdot}y$, by Lemma \ref{lem: 9}.\end{proof}
\begin{lem}
Let $X$ be an ordered set containing a locally bounded commutative
ordered monoid $M$ with a distinguished element $0$, as an almost
dense, bicofinal subset. Let $x,y\in X$. Suppose also that the multiplication
on $M$ be preadmissible. For fixed $r_{0},s_{0},r_{0}',s_{0}'\in M$
and for all $a,b,a',b'\in M$, if $r_{0},a\leq x\leq s_{0},b$ and
$r_{0}',a'\leq y\leq s_{0}',b'$, then $\min\left\{ aa',ab',ba',bb'\right\} \leq\max\left\{ r_{0}r_{0}',r_{0}s_{0}',s_{0}r_{0}',s_{0}s_{0}'\right\} $.
\label{lem: 11}\end{lem}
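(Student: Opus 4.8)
The plan is to produce a single element of $M$ whose product $pq$ is sandwiched between the two quantities we wish to compare. The key observation is that the hypotheses squeeze $x$ into the intersection of the interval cut out by $\left\{a,b\right\}$ and the interval cut out by $\left\{r_{0},s_{0}\right\}$, and likewise for $y$. Concretely, I would set $p=\max\left\{a,r_{0}\right\}$ and $q=\max\left\{a',r_{0}'\right\}$; these exist because $M$ is locally bounded and $\left\{a,r_{0}\right\},\left\{a',r_{0}'\right\}$ are finitely enumerable, and by definition of maximum they satisfy $a\leq p$, $r_{0}\leq p$ and lie in their respective sets. Since $p$ equals $a$ or $r_{0}$ and both $a\leq x$ and $r_{0}\leq x$ hold, we get $p\leq x$; combined with $x\leq b$ and $x\leq s_{0}$ this yields $p\leq b$ and $p\leq s_{0}$, using that $\leq$ is transitive (if $b<p$, cotransitivity gives $b<x$ or $x<p$, both excluded, so $b<p$ is false). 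Together with $a\leq p$ and $r_{0}\leq p$ this produces the two chains $a\leq p\leq b$ and $r_{0}\leq p\leq s_{0}$, and the identical argument applied to $y$ gives $a'\leq q\leq b'$ and $r_{0}'\leq q\leq s_{0}'$.

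Next I would apply Theorem \ref{thm: 41} to $M$ itself, which is legitimate because $M$ is an ordered set carrying a preadmissible multiplication, so the hypotheses of that theorem hold with $M$ in the role of the ambient set and $p,q$ in the role of $x,y$. From $a\leq p\leq b$ and $a'\leq q\leq b'$, taking the lower bound to be $\min\left\{aa',ab',ba',bb'\right\}$ (which exists by local boundedness), Theorem \ref{thm: 41} gives $\min\left\{aa',ab',ba',bb'\right\}\leq pq$. From $r_{0}\leq p\leq s_{0}$ and $r_{0}'\leq q\leq s_{0}'$, taking the upper bound to be $\max\left\{r_{0}r_{0}',r_{0}s_{0}',s_{0}r_{0}',s_{0}s_{0}'\right\}$, the same theorem gives $pq\leq\max\left\{r_{0}r_{0}',r_{0}s_{0}',s_{0}r_{0}',s_{0}s_{0}'\right\}$. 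Transitivity of $\leq$ then chains these into the desired inequality.

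The only genuine content is the choice of the witness $p=\max\left\{a,r_{0}\right\}$ (and $q=\max\left\{a',r_{0}'\right\}$): it is the natural candidate sitting in both bounding intervals simultaneously, so that the single product $pq$ can be bounded below using the $a,b$ data and above using the $r_{0},s_{0}$ data. I expect the obstacle to be bookkeeping rather than depth: one must extract $p\leq x$ constructively from the disjunction ``$p=a$ or $p=r_{0}$'' supplied by finite enumerability, and must check that Theorem \ref{thm: 41} is being invoked with $M$ as the ambient ordered set. No case split on signs and no appeal to excluded middle is required, so the argument remains constructive throughout.
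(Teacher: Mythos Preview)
Your proof is correct and follows essentially the same route as the paper: both arguments produce witnesses $m,m'\in M$ (the paper leaves their existence to local boundedness while you explicitly take $p=\max\{a,r_{0}\}$, $q=\max\{a',r_{0}'\}$) lying in both bounding boxes, then bound the single product $pq=mm'$ below by $\min\{aa',ab',ba',bb'\}$ and above by $\max\{r_{0}r_{0}',r_{0}s_{0}',s_{0}r_{0}',s_{0}s_{0}'\}$ via Theorem~\ref{thm: 41} (the paper phrases this as Lemma~\ref{lem: 58}, which is the same theorem specialized to minima and maxima). Your extra care in extracting $p\leq x$ constructively from the finite-enumeration disjunction and in verifying transitivity of $\leq$ is sound, and your invocation of Theorem~\ref{thm: 41} with $M$ as the ambient ordered set is legitimate since the multiplication on $M$ is assumed preadmissible.
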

\begin{proof}
Since $M$ is locally bounded and almost dense in $X$, there are
$m,m'\in M$ such that $r_{0},a\leq m\leq s_{0},b$ and $r_{0}',a'\leq m'\leq s_{0}',b'$.
Therefore, 
\[
\min\left\{ r_{0}r_{0}',r_{0}s_{0}',s_{0}r_{0}',s_{0}s_{0}'\right\} ,\min\left\{ aa',ab',ba',bb'\right\} \leq mm'\leq\max\left\{ r_{0}r_{0}',r_{0}s_{0}',s_{0}r_{0}',s_{0}s_{0}'\right\} ,\max\left\{ aa',ab',ba',bb'\right\} 
\]
 by Lemma \ref{lem: 58}.\end{proof}
\begin{thm}
Let $X$ be an ordered set containing a locally bounded commutative
ordered monoid $M$ with a distinguished element $0$, as an almost
dense, bicofinal subset. In addition, assume that the multiplication
on $M$ be preadmissible and satisfy the following ({*}): for $x,y\in X$,
if $c<d$ in $M$, then there are $a,b,a',b'\in M$ such that $a\leq x\leq b$;
$a'\leq y\leq b'$; and either $c<\min\left\{ aa',ab',ba',bb'\right\} $
or $\max\left\{ aa',ab',ba',bb'\right\} <d$. Let $x,y\in X$; then
the subset $P_{x,y}=\left\{ \min\left\{ aa',ab',ba',bb'\right\} :a,b,a',b'\in M,\textrm{ }a\leq x\leq b\textrm{ and }a'\leq y\leq b'\right\} $
is supable. \label{thm: 12}\end{thm}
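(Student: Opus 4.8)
The plan is to verify the three requirements of supability for $P_{x,y}$ in turn---nonemptiness, boundedness above, and upper order locatedness---treating the last as the heart of the matter. For nonemptiness I would invoke bicofinality of $M$ in $X$: there exist $a,b,a',b'\in M$ with $a\leq x\leq b$ and $a'\leq y\leq b'$, and since $M$ is locally bounded the finitely enumerable set $\left\{ aa',ab',ba',bb'\right\}$ has a minimum in $M$, so $\min\left\{ aa',ab',ba',bb'\right\}$ is a genuine element of $P_{x,y}$.

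For boundedness above I would fix, again by bicofinality, elements $r_{0}\leq x\leq s_{0}$ and $r_{0}'\leq y\leq s_{0}'$ in $M$, and claim $\max\left\{ r_{0}r_{0}',r_{0}s_{0}',s_{0}r_{0}',s_{0}s_{0}'\right\}$ is an upper bound. Indeed, given any member of $P_{x,y}$ realized by $a,b,a',b'\in M$ with $a\leq x\leq b$ and $a'\leq y\leq b'$, combining these with the fixed inequalities yields $r_{0},a\leq x\leq s_{0},b$ and $r_{0}',a'\leq y\leq s_{0}',b'$, so Lemma \ref{lem: 11} gives $\min\left\{ aa',ab',ba',bb'\right\} \leq\max\left\{ r_{0}r_{0}',r_{0}s_{0}',s_{0}r_{0}',s_{0}s_{0}'\right\}$, and this holds for every member.

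For upper order locatedness, suppose $p<q$ in $X$ (I rename the generic pair to avoid clashing with the fixed $x,y$). Since $M$ is almost dense in $X$, there are $c,d\in M$ with $p\leq c<d\leq q$. Applying ({*}) to $c<d$ produces $a,b,a',b'\in M$ with $a\leq x\leq b$, $a'\leq y\leq b'$, and either $c<\min\left\{ aa',ab',ba',bb'\right\}$ or $\max\left\{ aa',ab',ba',bb'\right\} <d$. In the first case, set $s=\min\left\{ aa',ab',ba',bb'\right\} \in P_{x,y}$; cotransitivity applied to $c<s$ gives $c<p$ or $p<s$, and $c<p$ is impossible since $p\leq c$, so $p<s$. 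In the second case I would show $u=\max\left\{ aa',ab',ba',bb'\right\}$ is an upper bound of $P_{x,y}$: for any member $\min\left\{ \alpha\alpha',\alpha\beta',\beta\alpha',\beta\beta'\right\}$ realized by $\alpha\leq x\leq\beta$ and $\alpha'\leq y\leq\beta'$, apply Lemma \ref{lem: 11} with $(a,b,a',b')$ playing the role of $(r_{0},s_{0},r_{0}',s_{0}')$; the hypotheses $a,\alpha\leq x\leq b,\beta$ and $a',\alpha'\leq y\leq b',\beta'$ hold, so the member is $\leq u$. Finally, cotransitivity applied to $u<d$ gives $u<q$ or $q<d$, and $q<d$ is impossible since $d\leq q$, so $u<q$. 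Either way the defining alternative for upper order locatedness is met.

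The main obstacle is the second case of the located part. Because Theorem \ref{thm: 12} equips only $M$, not $X$, with a multiplication, I cannot argue that ``$xy$ is an upper bound'' and route through Lemma \ref{lem: 58}; the product $xy$ simply is not yet defined for $x,y\in X$. Instead the candidate $u$ must be shown to dominate every member of $P_{x,y}$ directly, and this is exactly what Lemma \ref{lem: 11} supplies once the quadruple furnished by ({*}) is substituted for the fixed $(r_{0},s_{0},r_{0}',s_{0}')$. The remaining work is purely order-theoretic bookkeeping: turning the mixed $\leq$/$<$ chains into strict inequalities by cotransitivity together with asymmetry, which is routine in this constructive setting.
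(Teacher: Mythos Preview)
Your proof is correct and follows essentially the same route as the paper's: nonemptiness from bicofinality and local boundedness, boundedness above from Lemma~\ref{lem: 11}, and upper order locatedness from almost density together with~({*}). The paper merely sketches these three points in a single sentence, whereas you have written out the details---in particular the two uses of Lemma~\ref{lem: 11} and the cotransitivity/asymmetry step converting $p\leq c<s$ into $p<s$ and $u<d\leq q$ into $u<q$---so your version is a faithful expansion of the same argument.
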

\begin{proof}
The subset $P_{x,y}$ is nonempty because $M$ is bicofinal in $X$
and $M$ is locally bounded, and $P_{x,y}$ is bounded above by Lemma
\ref{lem: 11}. Upper order locatedness of $P_{x,y}$ follows from
almost density of $M$ in $X$ and from ({*}). 
\end{proof}
Let $M$ be a locally bounded commutative ordered monoid with a distinguished
element $0$, and let the multiplication on $M$ be continuous%
\footnote{More precisely, the multiplication on $M$ is continuous in the product
topology. In \cite{key-5} (p. 18), the order topology on an ordered
set is introduced. %
}, be preadmissible, and satisfy the condition ({*}) in Theorem \ref{thm: 12}.
For $x,y\in\overline{M}$, let the multiplication on $\overline{M}$
be $xy=\sup P_{x,y}$, with $P_{x,y}$ being defined in Theorem \ref{thm: 12}.
Under these hypotheses, we will show the following theorems about
this multiplication on $\overline{M}$.
\begin{thm}
For each $x\in\overline{M}$, $x0=0=0x$.\end{thm}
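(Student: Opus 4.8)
The plan is to show that the defining supremum $x0=\sup P_{x,0}$ collapses to $0$ by proving that $0$ is simultaneously a member and an upper bound of $P_{x,0}$; once both hold, $0=\max P_{x,0}=\sup P_{x,0}$, and a symmetric remark will settle $0x$. Throughout I would use only the structure already available: condition (1) of preadmissibility ($0c=0=c0$ for $c\in M$), the monotonicity in Theorem \ref{thm: 40}, commutativity of $M$, and local boundedness (so that each $\min$ is attained and is a lower bound of its set).

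First I would establish membership. Because $M$ is bicofinal in $\overline{M}$, choose $a,b\in M$ with $a\leq x\leq b$, and take $a'=b'=0$, which is legitimate since $0\leq 0\leq 0$ by reflexivity of $\leq$. Condition (1) gives $a0=b0=0$, so the corresponding element of $P_{x,0}$ is $\min\left\{a0,a0,b0,b0\right\}=0$; hence $0\in P_{x,0}$.

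The main step, and the place where constructivity bites, is showing that $0$ bounds $P_{x,0}$ from above, i.e.\ that $\min\left\{aa',ab',ba',bb'\right\}\leq 0$ whenever $a\leq x\leq b$ and $a'\leq 0\leq b'$ in $M$. I cannot split on the sign of $a$ by trichotomy, so instead I would argue by refutation: assume $0<\min\left\{aa',ab',ba',bb'\right\}$. Since this minimum is a lower bound, cotransitivity forces $0<aa'$ and $0<ab'$. From $a'\leq 0$ together with Theorem \ref{thm: 40}(1) and commutativity, $a>0$ would give $aa'\leq 0$, contradicting $0<aa'$; hence $a\leq 0$. Symmetrically, from $0\leq b'$ with Theorem \ref{thm: 40}(2) and commutativity, $a<0$ would give $ab'\leq 0$, contradicting $0<ab'$; hence $0\leq a$. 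Negative antisymmetry then yields $a=0$, whence $aa'=0$ by condition (1), contradicting $0<aa'$. Therefore $\min\left\{aa',ab',ba',bb'\right\}\leq 0$, so $0$ is an upper bound of $P_{x,0}$.

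Combining the two halves, $0\in P_{x,0}$ and $0$ is an upper bound, so $\sup P_{x,0}=0$, that is, $x0=0$. Finally, commutativity of $M$ makes the defining family symmetric, $P_{0,x}=P_{x,0}$ (relabel $a\leftrightarrow a'$ and $b\leftrightarrow b'$ and use $cc'=c'c$ in $M$), so the same computation gives $0x=0$. I expect the only genuine obstacle to be the upper-bound step: keeping the sign analysis constructive by routing both ``$a>0$'' and ``$a<0$'' through contradictions that combine via negative antisymmetry to pin $a=0$, rather than presuming a decidable trichotomy.
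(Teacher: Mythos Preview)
Your argument is correct, and for the inequality $x0\leq 0$ it is essentially the paper's own: both suppose $0<\min\{aa',ab',ba',bb'\}$ for some quadruple with $a'\leq 0\leq b'$, use the sign constraints on $a'$ and $b'$ together with Theorem~\ref{thm: 40} to rule out $a>0$ and $a<0$, and conclude $a=0$ by negative antisymmetry, contradicting $0<aa'$. The only cosmetic difference is that the paper first passes from $0<x0=\sup P_{x,0}$ to a single offending element of $P_{x,0}$, whereas you verify directly that every element of $P_{x,0}$ lies below $0$.

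Where you genuinely diverge is the inequality $0\leq x0$. The paper assumes $x0<0$, uses almost density to find $r<s\leq 0$ with $x0\leq r$, and then invokes condition~({*}) to produce a quadruple with either $r<\min\{aa',ab',ba',bb'\}$ (immediately absurd) or $\max\{aa',ab',ba',bb'\}<s\leq 0$, after which a mirror sign analysis on $a$ yields a contradiction. Your route is shorter and avoids~({*}) entirely: choosing $a'=b'=0$ places $0$ itself in $P_{x,0}$, so the supremum is at least $0$. This buys you an argument that works under weaker hypotheses (no appeal to~({*}) or to almost density for this direction), at the cost of being special to the factor $0$; the paper's method, though heavier here, illustrates the general machinery (condition~({*})) that drives the later results such as Theorem~\ref{thm: 14}. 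Your symmetry remark $P_{0,x}=P_{x,0}$ via commutativity of $M$ is also a clean way to dispatch $0x=0$.
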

\begin{proof}
If $0<x0$, then $0<\min\left\{ aa',ab',ba',bb'\right\} $ for some
$a,b,a',b'\in M$ with $a\leq x\leq b$ and $a'\leq0\leq b'$ because
$x0=\sup P_{x,0}$, by definition. Then
\[
\begin{array}{c}
0<aa',\\
0<ab',\\
0<ba',\\
0<bb'.
\end{array}
\]
Since $0<aa'$, it is false that $a=0$ and it is false that $a>0$.
Since $0<ab'$, it is false that $a=0$ and it is false that $a<0$.
Hence, $a=0$ by negative antisymmetry, which is impossible since
the multiplication on $M$ is admissible, $0<aa'$, and $0<ab'$.
Therefore, $x0\leq0$. 

If $x0<0$, then $x0\leq r<s\leq0$ for some $r,s\in M$, since $M$
is almost dense in $\overline{M}$. By condition ({*}) in Theorem
\ref{thm: 12}, there are $a,b,a',b'\in M$ such that $a\leq x\leq b$;
$a'\leq0\leq b'$; and either $r<\min\left\{ aa',ab',ba',bb'\right\} $
or $\max\left\{ aa',ab',ba',bb'\right\} <s$. If $r<\min\left\{ aa',ab',ba',bb'\right\} $,
then $\min\left\{ aa',ab',ba',bb'\right\} \leq x0<r<\min\left\{ aa',ab',ba',bb'\right\} $,
which is impossible by asymmetry. If $\max\left\{ aa',ab',ba',bb'\right\} <s$,
then 
\[
\begin{array}{c}
aa'<0,\\
ab'<0,\\
ba'<0,\\
bb'<0.
\end{array}
\]
 Since $aa'<0$, it is false that $a=0$ and it is false that $a<0$.
Since $ab'<0$, it is false that $a=0$ and it is false that $a>0$.
Hence, $a=0$ by negative antisymmetry, which is impossible. Therefore,
$x0\geq0$. 

Since $x0\leq0$ and $x0\geq0$, it follows $x0=0$, by negative antisymmetry.
Similarly, $0=0x$.\end{proof}
\begin{thm}
Let $x,y\in\overline{M}$. For $c,d\in M$, $c\leq xy\leq d$ implies
there are $a,b,a',b'\in M$ such that $a\leq x\leq b$, $a'\leq y\leq b'$,
and $c\leq\min\left\{ aa',ab',ba',bb'\right\} \leq xy\leq\max\left\{ aa',ab',ba',bb'\right\} \leq d$.
\label{thm: 14}\end{thm}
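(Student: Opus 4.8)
The plan is to split the two-sided estimate into two one-sided ones and recombine them. First note that for \emph{every} choice of $a\le x\le b$ and $a'\le y\le b'$ the inner inequalities $\min\left\{ aa',ab',ba',bb'\right\} \le xy\le\max\left\{ aa',ab',ba',bb'\right\}$ already hold by Lemma \ref{lem: 58}, so the only genuine content is to arrange the outer inequalities $c\le\min\left\{ aa',ab',ba',bb'\right\}$ and $\max\left\{ aa',ab',ba',bb'\right\}\le d$. I would also record a monotonicity principle: if a box $[a_{1},b_{1}]\times[a_{1}',b_{1}']$ about $(x,y)$ contains a smaller box about $(x,y)$, then, applying Lemma \ref{lem: 58} with each corner of the smaller box in the role of the point, all four corner products of the smaller box lie between $\min\left\{ a_{1}a_{1}',a_{1}b_{1}',b_{1}a_{1}',b_{1}b_{1}'\right\}$ and $\max\left\{ a_{1}a_{1}',a_{1}b_{1}',b_{1}a_{1}',b_{1}b_{1}'\right\}$. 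Hence shrinking a box can only raise its corner-minimum and lower its corner-maximum.

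Granting this, it suffices to produce \emph{one} box $B_{1}$ whose corner-minimum is at least $c$ and \emph{one} box $B_{2}$ whose corner-maximum is at most $d$. Since $M$ is locally bounded, the coordinatewise intersection $B$ of $B_{1}$ and $B_{2}$ (with $x$-interval $[\max\left\{ a_{1},a_{2}\right\} ,\min\left\{ b_{1},b_{2}\right\} ]$ and similarly in $y$) is again a box about $(x,y)$ contained in both; by the monotonicity principle its corner-minimum is $\ge c$ and its corner-maximum is $\le d$, which is exactly the box the theorem demands.

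Each one-sided box I would extract from condition ({*}) by a forced-disjunct argument. To build $B_{2}$, assuming $xy<d$, use almost density of $M$ in $\overline{M}$ to find $p,q\in M$ with $xy\le p<q\le d$, and apply ({*}) to the strict pair $p<q$. This yields $a,b,a',b'\in M$ with $a\le x\le b$, $a'\le y\le b'$, and either $p<\min\left\{ aa',ab',ba',bb'\right\} $ or $\max\left\{ aa',ab',ba',bb'\right\} <q$. The first alternative is impossible, since $\min\left\{ aa',ab',ba',bb'\right\} \le xy\le p$ by Lemma \ref{lem: 58}; so the second holds, giving $\max\left\{ aa',ab',ba',bb'\right\} <q\le d$. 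A symmetric argument, assuming $c<xy$ and choosing $p',q'\in M$ with $c\le p'<q'\le xy$, applies ({*}) to $p'<q'$ and rules out the alternative $\max\left\{ aa',ab',ba',bb'\right\} <q'$ (as $q'\le xy\le\max\left\{ aa',ab',ba',bb'\right\} $), forcing $c\le p'<\min\left\{ aa',ab',ba',bb'\right\} $ and hence the box $B_{1}$.

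The main obstacle is that the hypothesis supplies only the non-strict $c\le xy\le d$, whereas both forced-disjunct constructions require the strict inequalities $c<xy$ and $xy<d$. When $c<d$, cotransitivity applied to $c<d$ yields $c<xy$ or $xy<d$, i.e.\ one of the two strict inequalities, but not both at once; to secure the missing side I would interpolate further points of $M$ and reapply cotransitivity. I expect the genuinely delicate point to be the degenerate endpoints $xy=c$ and $xy=d$: there the desired box would have to realise $xy$ exactly as a corner product, which pins $x$ and $y$ to very special positions relative to $M$, so this boundary case is where I anticipate the real difficulty (and likely the place where a strict hypothesis $c<xy<d$, or a separate argument, is needed).
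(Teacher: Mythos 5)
Your route is genuinely different from the paper's. The paper never touches condition ({*}) in this proof: it invokes topological density of $M$ in $\overline{M}$ and continuity of the multiplication on $M$ to produce a single box $\left[a,b\right]\times\left[a',b'\right]$ about $\left(x,y\right)$ whose four corner products are all within an arbitrarily small tolerance of one another, and then asserts the full chain $c\leq\min\leq xy\leq\max\leq d$. Your alternative --- extracting one one-sided box from ({*}) by ruling out the other disjunct, doing the same on the other side, and intersecting the two boxes using local boundedness together with the corner-monotonicity consequence of Theorem \ref{thm: 41} --- is correct as far as it goes, and it cleanly settles the case in which both $c<xy$ and $xy<d$ are known.

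The gap you flag at the endpoints is real, and you should not expect to close it, because the statement itself fails there. Take $M=\mathbb{Q}$, $x=y=\sqrt{2}$, $c=0$, $d=2$, so that $c\leq xy\leq d$. Any rationals $b,b'$ with $x\leq b$ and $y\leq b'$ satisfy $\sqrt{2}<b$ and $\sqrt{2}<b'$ (they are apart from the irrational $\sqrt{2}$), hence $bb'>2=d$; since $bb'\leq\max\left\{ aa',ab',ba',bb'\right\} $, the required inequality $\max\left\{ aa',ab',ba',bb'\right\} \leq d$ is unattainable. The same phenomenon defeats the paper's own argument: making $aa',ab',ba',bb'$ ``very close to each other'' yields only that $\min$ and $\max$ lie within $\epsilon$ of $xy$, which gives $c\leq\min$ and $\max\leq d$ only when $xy$ is strictly interior to $\left[c,d\right]$, and the paper supplies no justification for the outer inequalities beyond this. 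Your proposed repair --- interpolating further points of $M$ and reapplying cotransitivity --- also cannot succeed: cotransitivity applied to $c<d$ yields only the disjunction $c<xy$ or $xy<d$, and no interpolation converts $xy\leq d$ into $xy<d$ when in fact $xy=d$. So your proof is incomplete, but at precisely the point where the theorem, and the paper's proof of it, are themselves defective; a correct statement would need strict outer bounds (or some other weakening), at which point your ({*})-based argument would go through.
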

\begin{proof}
In \cite{key-5} (Theorem 32), we proved that a nonempty almost dense
subset of an ordered set is topologically dense in that ordered set,
so $M$ is topologically dense in $\overline{M}$. Since elements
of $M$ can be arbitrarily close to $x,y$ and to themselves, elements
of $M\times M$ can be arbitrarily close to $\left(x,y\right)$ and
to themselves in the product topology. Hence, take $a,b,a',b'\in M$
close enough to $x,y$ such that $a\leq x\leq b$ and $a'\leq y\leq b'$
and such that $aa',ab',ba',bb'$ are very close to each other by continuity
of multiplication on $M$. Therefore, $c\leq\min\left\{ aa',ab',ba',bb'\right\} \leq xy\leq\max\left\{ aa',ab',ba',bb'\right\} \leq d$;
note that $\min\left\{ aa',ab',ba',bb'\right\} \leq xy$ by the definition
of the multiplication on $\overline{M}$ and that $xy\leq\max\left\{ aa',ab',ba',bb'\right\} $
because $\max\left\{ aa',ab',ba',bb'\right\} $ is an upper bound
of $P_{x,y}$ by Lemma \ref{lem: 11} and $xy$ is the least upper
bound of $P_{x,y}$ in Theorem \ref{thm: 12}.\end{proof}
\begin{thm}
The multiplication on $\overline{M}$ extends the multiplication on
$M$.\end{thm}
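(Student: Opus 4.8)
The plan is to fix $x,y\in M$ (viewed inside $\overline{M}$ via the embedding) and show that the defined product $\sup P_{xy}$ coincides with the monoid product, which I will temporarily write as $x\cdot y$ to distinguish it. The whole argument reduces to exhibiting $x\cdot y$ as the \emph{maximum} element of $P_{xy}$, since a supable set that contains an upper bound of itself has that element as its supremum.

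First I would record that $x\le x$ and $y\le y$: applying asymmetry with equal arguments, $x<x$ implies $x<x$ is false, which forces $x<x$ to be false, i.e.\ $x\le x$. Consequently I may legitimately take $a=b=x$ and $a'=b'=y$ in the definition of $P_{xy}$, and then
\[
\min\left\{ aa',ab',ba',bb'\right\} =\min\left\{ x\cdot y,x\cdot y,x\cdot y,x\cdot y\right\} =x\cdot y,
\]
so $x\cdot y\in P_{xy}$. Next I would show $x\cdot y$ is an upper bound of $P_{xy}$: given any $a,b,a',b'\in M$ with $a\le x\le b$ and $a'\le y\le b'$, I apply Theorem \ref{thm: 41} with the ambient ordered set taken to be $M$ itself, using $l=\min\left\{ aa',ab',ba',bb'\right\}$ and $u=\max\left\{ aa',ab',ba',bb'\right\}$. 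This yields $\min\left\{ aa',ab',ba',bb'\right\} \le x\cdot y$, which is exactly the statement that the generic element of $P_{xy}$ lies below $x\cdot y$.

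Combining the two steps, $x\cdot y\in P_{xy}$ and $x\cdot y$ is an upper bound of $P_{xy}$; any upper bound $u$ of $P_{xy}$ satisfies $x\cdot y\le u$ precisely because $x\cdot y\in P_{xy}$, so $x\cdot y$ is the least upper bound. Hence $\sup P_{xy}=x\cdot y$, which is the assertion that the multiplication on $\overline{M}$ restricts to the given multiplication on $M$. I do not anticipate a genuine obstacle here: the only point requiring care is the appeal to Theorem \ref{thm: 41} with the ordered set equal to $M$, which is licit because $M$ is an ordered set equipped with a preadmissible multiplication, exactly the hypothesis that theorem needs, and no almost-density or bicofinality assumption enters there. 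The phenomenon making the proof short is that, once both factors already lie in $M$, the supremum defining the product is literally attained inside $P_{xy}$.
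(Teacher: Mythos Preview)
Your proof is correct and follows essentially the same route as the paper. Both arguments take $a=b=x$, $a'=b'=y$ to place $x\cdot y$ in $P_{xy}$, and both use the preadmissibility of the multiplication on $M$ (you via Theorem~\ref{thm: 41}, the paper via its immediate consequence Lemma~\ref{lem: 58}) to see that $x\cdot y$ bounds $P_{xy}$ above; the paper phrases the conclusion as the sandwich $x\cdot_M y\le x\cdot_{\overline{M}}y\le x\cdot_M y$ together with negative antisymmetry, while you phrase it as ``$x\cdot y$ is the maximum of $P_{xy}$, hence its supremum,'' but these are the same argument.
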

\begin{proof}
Let $m,m'\in M$. Since $m\leq m\leq m$ and $m'\leq m'\leq m'$ and
since the multiplication on $M$ is preadmissible, $m\cdot_{M}m'\leq m\cdot_{\overline{M}}m'\leq m\cdot_{M}m'$,
by Lemma \ref{lem: 58}, so $m\cdot_{M}m'=m\cdot_{\overline{M}}m'$,
by negative antisymmetry.\end{proof}
\begin{thm}
Let $M$ be a locally bounded commutative ordered monoid with a distinguished
element $0$, and let the multiplication on $M$ be continuous, be
preadmissible, and satisfy the condition ({*}) in Theorem \ref{thm: 12}.
Then $\overline{M}$ is a commutative ordered monoid. \label{thm: 16}\end{thm}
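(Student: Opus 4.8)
The plan is to establish, for the multiplication $xy=\sup P_{xy}$ on $\overline{M}$ (well defined by Theorem \ref{thm: 12} and completeness of $\overline{M}$), the three defining properties of a commutative ordered monoid: that the monoid identity $1$ of $M$ stays a two-sided identity, that the multiplication is commutative and associative, and that $0<x$ and $0<y$ imply $0<xy$. The unifying idea is to push every comparison down to products of elements of $M$, where the multiplication is preadmissible and Theorems \ref{thm: 40} and \ref{thm: 2-1} and Lemma \ref{lem: 11} are available; Theorem \ref{thm: 14} lets us pull a product in $\overline{M}$ apart into products of bounds in $M$, and Lemma \ref{lem: 9} lets us glue two-sided estimates back into equalities. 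Two facts recur: if $p\in P_{uv}$ then $p\leq uv$ since $uv=\sup P_{uv}$; and by Lemma \ref{lem: 11}, for $r_{0},s_{0},r_{0}',s_{0}'\in M$ with $r_{0}\leq u\leq s_{0}$ and $r_{0}'\leq v\leq s_{0}'$, the value $\max\{r_{0}r_{0}',r_{0}s_{0}',s_{0}r_{0}',s_{0}s_{0}'\}$ is an upper bound of $P_{uv}$, so $uv\leq\max\{r_{0}r_{0}',r_{0}s_{0}',s_{0}r_{0}',s_{0}s_{0}'\}$.

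Commutativity and positivity are quickest. Relabelling bounds shows $P_{xy}=P_{yx}$: the element $\min\{aa',ab',ba',bb'\}$ of $P_{xy}$ arising from $a\leq x\leq b$, $a'\leq y\leq b'$ equals, by commutativity of $M$, the element of $P_{yx}$ arising from $a'\leq y\leq b'$, $a\leq x\leq b$; hence $xy=yx$. For positivity, if $0<x$ and $0<y$, then almost density gives $s',t'\in M$ with $0<s'\leq x$, $0<t'\leq y$, and bicofinality gives $b,b'\in M$ with $x\leq b$, $y\leq b'$; since $s'\leq b$ and $t'\leq b'$, cotransitivity upgrades $0<s',0<t'$ to $0<b,0<b'$. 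Then $\min\{s't',s'b',bt',bb'\}$ lies in $P_{xy}$ and, being the minimum (local boundedness) of products of strictly positive elements of $M$, is strictly positive by the positivity axiom of $M$; as $xy=\sup P_{xy}$ dominates it, cotransitivity gives $0<xy$.

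For the identity I prove $1\cdot x=x$ by two inequalities and negative antisymmetry. With $a=b=1$ and any $a'\leq x\leq b'$ in $M$, the corresponding element of $P_{1,x}$ is $\min\{a',b'\}=a'$; so every $a'\in M$ with $a'\leq x$ lies in $P_{1,x}$, and since $M$ is almost dense and bicofinal, $x$ is the least upper bound of these $a'$, whence $x\leq\sup P_{1,x}=1\cdot x$. Conversely, a typical element of $P_{1,x}$ is $\min\{aa',ab',ba',bb'\}$ with $a\leq 1\leq b$ and $a'\leq x\leq b'$; Theorem \ref{thm: 2-1}(1) applied in $M$ to $a\leq 1\leq b$ gives $\min\{aa',ba'\}\leq 1\cdot a'=a'$, so $\min\{aa',ab',ba',bb'\}\leq\min\{aa',ba'\}\leq a'\leq x$, making $x$ an upper bound of $P_{1,x}$ and $1\cdot x\leq x$. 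Thus $1\cdot x=x$, and commutativity gives $x\cdot 1=x$.

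The main work, and the expected obstacle, is associativity, which I handle through Lemma \ref{lem: 9}: it suffices that for all $c,d\in M$, $c\leq(xy)z\leq d$ iff $c\leq x(yz)\leq d$. For the forward direction, Theorem \ref{thm: 14} applied to $(xy)\cdot z$ yields $p,q,a'',b''\in M$ with $p\leq xy\leq q$, $a''\leq z\leq b''$, and $pa'',pb'',qa'',qb''$ all in $[c,d]$; a second use of Theorem \ref{thm: 14}, on $x\cdot y$ with bounds $p,q$, yields $a,b,a',b'\in M$ with $a\leq x\leq b$, $a'\leq y\leq b'$, and $\{aa',ab',ba',bb'\}\subseteq[p,q]$. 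Each of the eight triple products $efg$ ($e\in\{a,b\}$, $f\in\{a',b'\}$, $g\in\{a'',b''\}$) then lies in $[c,d]$: since $ef\in\{aa',ab',ba',bb'\}\subseteq[p,q]$, Theorem \ref{thm: 2-1}(1) in $M$ places $(ef)g$ between $\min\{pg,qg\}$ and $\max\{pg,qg\}$, and $pg,qg\in\{pa'',pb'',qa'',qb''\}\subseteq[c,d]$. Finally, setting $p'=\min\{a'a'',a'b'',b'a'',b'b''\}$ and $q'=\max\{a'a'',a'b'',b'a'',b'b''\}$, the first recurring fact gives $p'\leq yz$ and Lemma \ref{lem: 11} gives $yz\leq q'$, and likewise $\min\{ap',aq',bp',bq'\}\leq x(yz)\leq\max\{ap',aq',bp',bq'\}$; since, by associativity and commutativity of $M$, each of $ap',aq',bp',bq'$ is one of the eight triple products, all four lie in $[c,d]$, forcing $c\leq x(yz)\leq d$. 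The reverse implication is symmetric, with the two groupings exchanged, so Lemma \ref{lem: 9} gives $(xy)z=x(yz)$. The delicacy here is constructive rather than algebraic: one may not case-split on the signs of $a,b,a',b',a'',b''$, and the role of Theorems \ref{thm: 40} and \ref{thm: 2-1} and Lemma \ref{lem: 11}, all applied inside $M$, is exactly to encode those unknown signs through $\min$ and $\max$, so every placement of a triple product inside $[c,d]$ is obtained without a case distinction; the remaining effort is the bookkeeping of which four triple products appear as $\{pa'',pb'',qa'',qb''\}$ and as $\{ap',aq',bp',bq'\}$.
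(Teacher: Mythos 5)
Your proof is correct, and its overall strategy is the paper's: everything is pushed down to products of bounds in $M$ via Theorem \ref{thm: 14} and the definition $xy=\sup P_{xy}$, and equalities are recovered with Lemma \ref{lem: 9} or negative antisymmetry. The positivity argument is essentially identical to the paper's. Where you diverge is in the level of detail and in two local choices. For commutativity you observe directly that $P_{xy}=P_{yx}$, which is cleaner than the paper's route through Theorem \ref{thm: 14} and Lemma \ref{lem: 9}. For the identity you prove $x\leq 1\cdot x$ by showing $\left\{ a'\in M:a'\leq x\right\}\subseteq P_{1,x}$ and $1\cdot x\leq x$ by showing $x$ is an upper bound of $P_{1,x}$ via Theorem \ref{thm: 2-1}(1) applied inside $M$, then finish with negative antisymmetry; the paper instead verifies the interval characterization $a\leq x\leq b\Leftrightarrow a\leq x1\leq b$ and applies Lemma \ref{lem: 9}, using a small contradiction argument for the backward implication. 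Both work. The substantive difference is associativity: the paper dismisses it in one sentence, whereas you supply the actual argument --- two applications of Theorem \ref{thm: 14} to place all eight triple products $efg$ in $[c,d]$ (using Theorem \ref{thm: 2-1}(1) in $M$ to handle the unknown signs without case splits), followed by the key reassembly observation that, by local boundedness, $p'=\min\left\{ a'a'',a'b'',b'a'',b'b''\right\}$ and $q'=\max\left\{ a'a'',a'b'',b'a'',b'b''\right\}$ are themselves attained as products, so $ap',aq',bp',bq'$ are among the eight triple products and hence lie in $[c,d]$. That bookkeeping is exactly what the paper leaves implicit, and your version of it is sound; it is arguably the most valuable part of your write-up. (Only associativity of $M$, not commutativity, is actually needed in that step, but invoking both is harmless.)
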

\begin{proof}
Let $x,y,z\in\overline{M}$. For all $a,b\in M$, suppose $a\leq x\leq b$.
Since $1\in M$ and $0\leq1$, $a\leq x1\leq b$. Now suppose $a\leq x1\leq b$.
Then there are there are $c,d,a',b'\in M$ such that $c\leq x\leq d$,
$a'\leq1\leq b'$, and $a\leq\min\left\{ ca',cb',da',db'\right\} \leq x1\leq\max\left\{ ca',cb',da',db'\right\} \leq b$,
by Theorem \ref{thm: 14}. If $x<a$, then $e\leq x\leq m<m'\leq a$
for some $e,m,m'\in M$, so $e\leq x1\leq m$ by the definition of
the multiplication on $\overline{M}$. Hence, $x1<x1$, which is impossible,
so $a\leq x$. Similarly, $x\leq b$. Therefore, $x1=1$, by Lemma
\ref{lem: 9}. 

Associativity and commutativity of multiplication on $\overline{M}$
follows from associativity and commutativity on $M$ , Theorem \ref{thm: 14},
and Lemma \ref{lem: 9}.

Now suppose $0<x,y$. Then $0<a\leq x\leq b$ and $0<a'\leq y\leq b'$
for some $a,b,a',b'\in M$, since $M$ is almost dense and bicofinal
in $\overline{M}$. Since $M$ is an ordered monoid, $0<aa',ab',ba',bb'$,
so $0<\min\left\{ aa',ab',ba',bb'\right\} \leq xy$. Hence, $0<xy$.
\end{proof}
By a \textit{(locally bounded) commutative ordered ring} $X$, we
mean an ordered set with an addition $+$ and multiplication $\cdot$
such that $\left(X,+\right)$ is an ordered abelian group, $\left(X,\cdot\right)$
is a (locally bounded) commutative ordered monoid, and the distributive
law holds in $X$. We say $X$ is Archimedean if $\left(X,+\right)$
is Archimedean. Note that the multiplication on a commutative ordered
ring is preadmissible.
\begin{thm}
The real numbers $\mathbb{R}=\overline{\mathbb{Q}}$ are an Archimedean,
commutative ordered ring. \label{thm: 18}\end{thm}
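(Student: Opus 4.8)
The plan is to build the ring structure on $\mathbb{R}=\overline{\mathbb{Q}}$ out of the two completion theorems already in hand---Theorem~\ref{thm: 3} for the additive group and Theorem~\ref{thm: 16} for the multiplicative monoid---and then to verify by hand the one axiom neither of them supplies, namely the distributive law. Since $\mathbb{Q}$ is an Archimedean ordered field, $(\mathbb{Q},+)$ is an Archimedean ordered abelian group sitting in $\overline{\mathbb{Q}}$ as an almost dense, bicofinal subset by the definition of the completion; hence Theorem~\ref{thm: 3} applies directly and $(\overline{\mathbb{Q}},+)$ is an Archimedean ordered abelian group. In particular $\mathbb{R}$ is Archimedean, which is the last clause of the statement.

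For the multiplicative part I would check that $(\mathbb{Q},\cdot)$, with distinguished element $0$, satisfies every hypothesis of Theorem~\ref{thm: 16}. Commutativity, associativity, the unit $1$, and $0<x,\,0<y\Rightarrow 0<xy$ are field facts, so $(\mathbb{Q},\cdot)$ is a commutative ordered monoid; it is locally bounded because the order on $\mathbb{Q}$ is decidable, so every finitely enumerable subset has a least and a greatest element. Preadmissibility is the list $0x=0=x0$, $\,x<y\wedge z>0\Rightarrow xz<yz$, and $x<y\wedge z<0\Rightarrow xz>yz$, all order-field laws, and continuity of multiplication in the product order topology is standard. The only hypothesis needing genuine work is $(*)$: given $c<d$ in $\mathbb{Q}$ and $x,y\in\overline{\mathbb{Q}}$, almost density traps $x,y$ in rational intervals $[a,b]$ and $[a',b']$ of prescribed width, and continuity of multiplication makes the corner products cluster, so for narrow enough intervals $\max\{aa',ab',ba',bb'\}-\min\{aa',ab',ba',bb'\}<d-c$. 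Writing $p=\min$ and $q=\max$, both $p\le c$ and $d\le q$ cannot hold (they would give $d-c\le q-p$); since the order on $\mathbb{Q}$ is decidable I simply test whether $c<p$, and if not, $p\le c$ forces $q<d$. Thus $(*)$ holds and Theorem~\ref{thm: 16} makes $(\overline{\mathbb{Q}},\cdot)$ a commutative ordered monoid with $xy=\sup P_{xy}$.

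It remains to prove $x(y+z)=xy+xz$ for $x,y,z\in\overline{\mathbb{Q}}$, which I would do with the squeeze of Lemma~\ref{lem: 3}. Fix $\epsilon>0$ in $\mathbb{Q}$ and, by almost density, enclose $x\in[a,b]$, $y\in[a_1,b_1]$, $z\in[a_2,b_2]$ with rationals, taking all intervals narrow. Order compatibility of $+$ gives $a_1+a_2\le y+z\le b_1+b_2$, so Lemma~\ref{lem: 58} bounds $x(y+z)$ between the minimum and maximum of the corner products of $\{a,b\}$ with $\{a_1+a_2,\,b_1+b_2\}$, while Lemma~\ref{lem: 58} applied to $xy$ and to $xz$, followed by order compatibility of $+$, bounds $xy+xz$ between the minimum and maximum of the corresponding corner sums. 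Because distributivity is exact on $\mathbb{Q}$, the rational $a(a_1+a_2)=aa_1+aa_2$ lies in both bounding intervals, and continuity of multiplication makes each interval have width below $\epsilon/2$ once the enclosures are narrow enough; hence both $x(y+z)$ and $xy+xz$ lie in a single rational interval of width less than $\epsilon$, and Lemma~\ref{lem: 3} yields $x(y+z)=xy+xz$.

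I expect condition $(*)$ and the distributive law to be the only real content, the remaining clauses being citations of Theorems~\ref{thm: 3} and~\ref{thm: 16}. Within those two steps the main obstacle is the quantitative continuity estimate---turning a narrow rational enclosure of $x,y,z$ into a narrow rational enclosure of their products and sums---carried out using only the decidable order on $\mathbb{Q}$ and Lemma~\ref{lem: 3} in place of the law of excluded middle.
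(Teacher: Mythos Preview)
Your proposal is correct, and for the additive and multiplicative structures it matches the paper exactly: cite Theorem~\ref{thm: 3} for the Archimedean ordered abelian group $(\mathbb{R},+)$ and Theorem~\ref{thm: 16} for the commutative ordered monoid $(\mathbb{R},\cdot)$. You are in fact more careful than the paper here, since you spell out why $(\mathbb{Q},\cdot)$ meets the hypotheses of Theorem~\ref{thm: 16}, including a clean verification of condition $(*)$ via decidability of the rational order; the paper simply invokes Theorem~\ref{thm: 16} without comment.

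The genuine divergence is in the distributive law. The paper argues topologically: the maps $L(x,y,z)=x(y+z)$ and $R(x,y,z)=xy+xz$ are continuous on $\mathbb{R}^{3}$, agree on the dense subset $\mathbb{Q}^{3}$, and hence agree everywhere. Your route is a hands-on $\epsilon$-squeeze: trap $x,y,z$ in narrow rational boxes, use Lemma~\ref{lem: 58} to bound $x(y+z)$ and $xy+xz$ by rational corner expressions, note that rational distributivity places a common rational (e.g.\ $a(a_{1}+a_{2})=aa_{1}+aa_{2}$) in both bounding intervals, and finish with Lemma~\ref{lem: 3}. Your argument is more elementary and entirely self-contained within the lemmas of this paper, at the cost of a bit of bookkeeping; the paper's argument is shorter but leans on the continuity of the extended $+$ and $\cdot$ and on the ``continuous functions agreeing on a dense set coincide'' principle, both of which are imported from~\cite{key-5} rather than proved here.
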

\begin{proof}
By Theorem \ref{thm: 3}, $\left(\mathbb{R},+\right)$ is an Archimedean
ordered abelian group. By Theorem \ref{thm: 16}, $\left(\mathbb{R},\cdot\right)$
is a commutative ordered monoid. To prove the distributive law $x\left(y+z\right)=xy+xz$,
note that addition and multiplication on $\mathbb{R}$ are continuous
functions, so the functions on $\mathbb{R}^{3}$ defined by $L\left(x,y,z\right)=x\left(y+z\right)$
and by $R\left(x,y,z\right)=xy+xz$ are continuous. Since $\mathbb{Q}^{3}$
is topologically dense in $\mathbb{R}^{3}$ (because $\mathbb{Q}$
is topologically dense in $\mathbb{R}$) and since $L=R$ when restricted
to $\mathbb{Q}^{3}$, it follows $L=R$ on $\mathbb{R}$. Hence, the
distributive law holds in $\mathbb{R}$.
\end{proof}

\subsection{Invertibility of nonzero elements}

Any ordered set admits a tight apartness defined by $x\neq y$ if
$x<y$ or $y<x$. A tight apartness is a positive notion for the negative
notion of difference, which is two elements are ``different'' if
they are ``not equal''. A tight apartness is discussed in \cite{key-6-1}.
In an ordered set with a distinguished element $0$, we say an element
$x$ is \textit{nonzero} if $x\neq0$. An element $x$ in a monoid
$M$ is \textit{invertible} if $xy=1$ for some $y\in M$.
\begin{lem}
Let $x$ be a nonzero real number and $D_{x}=\left\{ 1/b\in\mathbb{Q}:0<x\leq b\:\textrm{or}\: x\leq b<0\right\} $.
Then $D_{x}$ is supable.\end{lem}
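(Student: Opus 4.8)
The plan is to reduce to the two cases $0<x$ and $x<0$ afforded by the hypothesis $x\neq0$, treat the first in detail, and note that the second is entirely symmetric. I will regard $D_{x}$ as a subset of the Archimedean ordered abelian group $\mathbb{Q}$ and verify supability through the criterion of Lemma \ref{lem: 2}(1): for each rational $\epsilon>0$ and each positive integer $m$ I must produce an element $a\in D_{x}$ together with a rational upper bound $u$ of $D_{x}$ satisfying $m(u-a)<\epsilon$. Producing such a pair for even one choice of $\epsilon,m$ already exhibits $D_{x}$ as nonempty and bounded above, so this single criterion captures all of supability at once.

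Assume $0<x$. Then the clause $x\leq b<0$ in the definition of $D_{x}$ is vacuous (it would force $0<x\leq b<0$), so $D_{x}=\{1/b:b\in\mathbb{Q},\ 0<x\leq b\}$. Fix once and for all a rational $p$ with $0<p\leq x$, obtained from almost density of $\mathbb{Q}$ in $\mathbb{R}$ applied to $0<x$. Two facts drive the estimate: first, if $0<p\leq a_{0}\leq x\leq b_{0}$ with $a_{0},b_{0}\in\mathbb{Q}$, then $1/b_{0}\in D_{x}$ and $1/a_{0}$ is an upper bound of $D_{x}$, since any $1/b\in D_{x}$ has $0<a_{0}\leq x\leq b$ and hence $1/b\leq1/a_{0}$; second, on the range $[p,\infty)$ the reciprocal is controlled by $1/a_{0}-1/b_{0}=(b_{0}-a_{0})/(a_{0}b_{0})\leq(b_{0}-a_{0})/p^{2}$, because $a_{0}b_{0}\geq p^{2}$.

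To feed Lemma \ref{lem: 2}(1) I first extract a two-sided rational approximation of $x$. Since $L_{x}=\{a\in\mathbb{Q}:a\leq x\}$ is supable, Lemma \ref{lem: 2}(1) (with $m=1$ and a suitable $\delta$) furnishes $a_{0}'\in L_{x}$ and a rational upper bound $b_{0}$ of $L_{x}$ with $b_{0}-a_{0}'<\delta$; a short argument from almost density shows every upper bound of $L_{x}$ satisfies $x\leq b_{0}$, so $a_{0}'\leq x\leq b_{0}$. Replacing $a_{0}'$ by $a_{0}=\max\{p,a_{0}'\}$ preserves $a_{0}\leq x$ while forcing $p\leq a_{0}$ and only decreasing $b_{0}-a_{0}$. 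Now, given $\epsilon$ and $m$, the choice $\delta=\epsilon p^{2}/m$ yields, by the two facts above, the witness $a=1/b_{0}\in D_{x}$, the upper bound $u=1/a_{0}$, and $m(u-a)\leq m(b_{0}-a_{0})/p^{2}<m\delta/p^{2}=\epsilon$. This verifies the criterion, so $D_{x}$ is supable.

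The case $x<0$ is symmetric: there the clause $0<x\leq b$ is vacuous, one fixes a rational $q$ with $x\leq q<0$, truncates the upper approximant via $b_{0}=\min\{b_{0}',q\}$ so that both approximants stay $\leq q<0$ and hence $\lvert a_{0}\rvert,\lvert b_{0}\rvert\geq\lvert q\rvert$, and runs the identical estimate with $p$ replaced by $\lvert q\rvert$. I expect the only delicate point to be constructive, and it is precisely where $x\neq0$ enters: the reciprocal map is ill-behaved near $0$, so the argument hinges on the denominators $a_{0}b_{0}$ staying bounded away from $0$. The case split $0<x$ or $x<0$ supplies a fixed rational ($p$, resp.\ $\lvert q\rvert$) separating $x$ from $0$, and the $\max$/$\min$ truncation propagates this separation to the approximants; with that uniform denominator bound the reciprocal is Lipschitz on the relevant range and transports the supability of $L_{x}$ to that of $D_{x}$.
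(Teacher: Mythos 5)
Your proof is correct and follows essentially the same route as the paper's: both verify the criterion of Lemma \ref{lem: 2}(1) by fixing a rational ($p$, resp.\ $r$) separating $x$ from $0$, extracting a tight two-sided rational approximation $a\leq x\leq b$, observing that $1/b\in D_{x}$ while $1/a$ is an upper bound, and using the lower bound $ab\geq p^{2}$ on the product of denominators to control the difference of reciprocals. Your version is in fact slightly more careful than the paper's, since you track the factor $m$ explicitly and explain how the two-sided approximation is obtained from the supability of $L_{x}$.
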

\begin{proof}
If $0<x$, let $\epsilon$ be a positive rational number and $r$
a rational number such that $0<r<x$. Then there are rational numbers
$a$ and $b$ such that $r<a\leq x\leq b$ and $b-a<r^{2}\epsilon$.
Also $r^{2}<ab$, $1/a$ is an upper bound of $D_{x}$, and 
\[
\left(1/b\right)-\left(1/a\right)=\left(a-b\right)/ba\leq\left(b-a\right)/ba<\epsilon.
\]

If $x<0$, let $r$ be a rational number such that $x<r<0$. Then
there are rational numbers $a$ and $b$ such that $a\leq x\leq b<r$
and $b-a<r^{2}\epsilon$. Then 
\[
\left(1/b\right)-\left(1/a\right)=\left(a-b\right)/ba<\epsilon.
\]

Therefore, $D_{x}$ is supable by Lemma \ref{lem: 2}(1).\end{proof}
\begin{thm}
A real number is nonzero if and only if it is invertible. \label{thm: 20}\end{thm}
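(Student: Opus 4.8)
The plan is to prove the two implications separately, reducing the easy direction (invertible $\Rightarrow$ nonzero) to the decidable order on $\mathbb{Q}$ via Theorem \ref{thm: 14}, and using the supremum $y=\sup D_{x}$ supplied by the preceding lemma as the explicit inverse for the hard direction (nonzero $\Rightarrow$ invertible).

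For the forward direction, suppose $xy=1$ for some $y\in\mathbb{R}$. Since $0<1$ in $\mathbb{Q}$, we have $0<xy$, so by almost density and bicofinality of $\mathbb{Q}$ in $\mathbb{R}$ there are rationals $c,d$ with $0<c\leq xy\leq d$. Theorem \ref{thm: 14} then produces $a,b,a',b'\in\mathbb{Q}$ with $a\leq x\leq b$, $a'\leq y\leq b'$, and $0<c\leq\min\{aa',ab',ba',bb'\}$, whence all four products are positive by cotransitivity. Because the order on $\mathbb{Q}$ is decidable, $a=0$ is excluded (it would force $aa'=0$), so either $a>0$ or $a<0$. In the first case $aa'>0$ gives $a'>0$ and then $ba'>0$ gives $b>0$, so $0<a\leq x$ and hence $0<x$ by cotransitivity; in the second case $a'<0$ and $b<0$, so $x\leq b<0$ and hence $x<0$. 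In either case $x\neq0$.

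For the converse, set $y=\sup D_{x}$ and aim to show $xy=1$. I would split on the apartness $x\neq0$ into the symmetric cases $0<x$ and $x<0$, describing only $0<x$. Fix a rational $r$ with $0<r<x$. Given $\epsilon>0$, the computation in the preceding lemma's proof yields rationals $a,b$ with $r<a\leq x\leq b$, with $b-a$ as small as I wish, such that $1/b\in D_{x}$ and $1/a$ is an upper bound of $D_{x}$; therefore $1/b\leq y\leq 1/a$. Applying Lemma \ref{lem: 58} to $a\leq x\leq b$ and $1/b\leq y\leq 1/a$, and noting that among the products $a/b,1,1,b/a$ the extremes are $a/b$ and $b/a$ since $0<a\leq b$, gives $a/b\leq xy\leq b/a$, while trivially $a/b\leq1\leq b/a$. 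Thus both $xy$ and $1$ lie in the rational interval $[a/b,b/a]$, and Lemma \ref{lem: 3} will force $xy=1$ once I show its length can be made smaller than $\epsilon$.

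The main obstacle is exactly this last quantitative squeezing: the length is $b/a-a/b=(b-a)(a+b)/(ab)$, so making $b-a$ merely small is not enough. Since $a,b$ are bounded below by $r>0$ and above by a fixed bound (as $b$ can be taken close to $x$), I must choose $b-a$ small relative to these bounds to guarantee $(b-a)(a+b)/(ab)<\epsilon$; with that choice Lemma \ref{lem: 3} yields $xy=1$, so $x$ is invertible with inverse $y=\sup D_{x}$. The case $x<0$ is handled identically after choosing a rational $r$ with $x<r<0$, and everything else is routine bookkeeping with the order axioms and the decidability of the rational order.
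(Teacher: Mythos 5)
Your proof is correct, and the substantive direction (nonzero implies invertible) follows the paper's route exactly: the inverse is $\sup D_{x}$, the squeeze $a/b\leq x\sup D_{x}\leq b/a$ together with $a/b\leq1\leq b/a$ is obtained from the interval bounds $a\leq x\leq b$ and $1/b\leq\sup D_{x}\leq1/a$, and Lemma \ref{lem: 3} finishes once $b-a$ is chosen small relative to the lower bound $r$ and an upper bound $s$ fixed in advance; the paper's explicit choice is $b-a<r^{2}\epsilon/2s$, which is precisely the quantitative refinement you correctly flag as necessary (so what you call the ``main obstacle'' is already resolved by the paper's choice of tolerance). Where you genuinely diverge is the converse. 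The paper bounds the inverse $y$ between $-N$ and $N$ via the Archimedean property, then applies cotransitivity to compare $x$ with $-1/N$, $0$, and $1/N$, ruling out $-1/N<x<1/N$ because it would force $-1<xy<1$, and concludes $x<0$ or $0<x$. You instead push the inequality $0<c\leq xy\leq d$ down to rational approximants via Theorem \ref{thm: 14}, use decidability of the order on $\mathbb{Q}$ to fix the sign of $a$, and propagate signs through the four products $aa',ab',ba',bb'$ to locate $x$ strictly on one side of $0$. Both arguments are constructive and correct; the paper's is shorter and needs only the Archimedean property and cotransitivity, while yours trades that for the approximation property of the product (Theorem \ref{thm: 14}) together with decidability of the rational order, which the paper's framework does supply (it is implicit in the local boundedness of $\mathbb{Q}$).
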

\begin{proof}
Let $x$ be a nonzero real number. If $0<x$, let $\epsilon$ be a
positive rational number, and let $r$ and $s$ be rational numbers
such that $0<r<x<s$. Then there are rational numbers $a$ and $b$
such that $r<a\leq x\leq b<s$ and $b-a<r^{2}\epsilon/2s$, so 
\[
1/b\leq\sup D_{x}\leq1/a,
\]

by the definition of $D_{x}$. Since $\min\left\{ a/b,1,b/a\right\} =a/b$
and $\max\left\{ a/b,1,b/a\right\} =b/a$, 
\[
a/b\leq x\sup D_{x}\leq b/a
\]

by Lemma \ref{lem: 11} and the definition of multiplication. Also
$a/b\leq1\leq b/a$ because $0<a\leq b$, and 
\[
\left(b/a\right)-\left(a/b\right)=\left(b^{2}-a^{2}\right)/ab=\left(\left(b+a\right)/ab\right)\left(b-a\right)<\epsilon.
\]

Hence, $x\sup D_{x}=1$, by Lemma \ref{lem: 3}. If $x<0$, let $r'$
and $s'$ be rational numbers such that $r'<x<s'<0$. Then 

\[
r'<a\leq x\leq b<s'<0
\]
and 
\[
b-a<(s'^{2}/-2r')\epsilon
\]

for some rational numbers $a$ and $b$, so $1/b\leq\sup D_{x}\leq1/a$.
Thus, 
\[
b/a\leq x\sup D_{x}\leq a/b
\]

with $b/a\leq1\leq a/b$ and $\left(a/b\right)-\left(b/a\right)<\epsilon$.
Therefore, 
\[
x\sup D_{x}=1.
\]

Conversely, let $x$ be an invertible real number, so $xy=1$ for
some $y\in\mathbb{R}$. Then there is a positive integer $N$ such
that $-N<y<N$. Also, by cotransitivity, either $-1/N<x$ or $x<0$
, and either $0<x$ or $x<1/N$. It is impossible that $x<0$ and
$0<x$, by irreflexivity, nor is it possible that $-1/N<x<1/N$; if
it were, $-1<xy<1$. Therefore, $x<0$ or $0<x$, so $x\neq0$.
\end{proof}
A \textit{Heyting field} is a field with a tight apartness. Heyting
fields are discussed in \cite{key-6-1}.
\begin{cor}
The real numbers are a Heyting field. \label{cor: 20}\end{cor}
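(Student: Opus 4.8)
The plan is to assemble the Heyting-field structure from the two main theorems just proved. By definition, a Heyting field is a field equipped with a tight apartness, so I must exhibit on $\mathbb{R}$ four things: (i) a commutative ring structure, (ii) a tight apartness, (iii) nontriviality $1\neq 0$, and (iv) the equivalence between being apart from $0$ and being invertible. None of these requires new constructive work; the point of the corollary is to collect the pieces.

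First, Theorem \ref{thm: 18} already shows that $\mathbb{R}=\overline{\mathbb{Q}}$ is an Archimedean commutative ordered ring; in particular $(\mathbb{R},+,\cdot)$ is a commutative ring with identity, the multiplicative identity $1$ coming from the monoid structure established in Theorem \ref{thm: 16} (where $x1=x$ is verified). This disposes of (i). For the apartness I would take the one carried by every ordered set, recalled at the start of this subsection: $x\neq y$ iff $x<y$ or $y<x$. Its tightness is exactly the content of negative antisymmetry, since if neither $x<y$ nor $y<x$ holds then $x=y$, i.e. $\neg(x\neq y)\Rightarrow x=y$; so (ii) is immediate.

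For (iii), note that $0<1$ holds in $\mathbb{Q}$ and the embedding of $\mathbb{Q}$ into $\mathbb{R}=\overline{\mathbb{Q}}$ preserves $<$, whence $0<1$ in $\mathbb{R}$ and therefore $1\neq 0$. Finally, (iv) is precisely Theorem \ref{thm: 20}: a real number is nonzero, that is, apart from $0$ in the above apartness, if and only if it is invertible. Combining (i)--(iv), $\mathbb{R}$ is a field in which the invertible elements are exactly those apart from $0$, equipped with a tight apartness, i.e. a Heyting field.

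I expect no genuine obstacle: the corollary is an assembly step, and all the real work was carried out in Theorems \ref{thm: 18} and \ref{thm: 20}. The only subtlety worth stating explicitly is bookkeeping about the word \emph{nonzero}: in Theorem \ref{thm: 20} it denotes apartness from $0$ under the order-induced apartness $x\neq 0 \Leftrightarrow (0<x \text{ or } x<0)$, which is exactly the apartness used to certify the Heyting-field structure. Hence invertibility matches apartness from $0$ with no further argument, and the equivalence required of a Heyting field holds on the nose.
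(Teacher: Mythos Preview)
Your proof is correct and follows the same approach as the paper, which simply cites Theorems \ref{thm: 18} and \ref{thm: 20}; you have merely spelled out the assembly in more detail, including the source of the tight apartness and the nontriviality $0<1$, neither of which the paper bothers to state explicitly.
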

\begin{proof}
By Theorems \ref{thm: 18} and \ref{thm: 20}.\end{proof}

\end{document}